\renewcommand\L{\mathcal{L}}
\renewcommand\P{\mathcal{P}}
\newcommand\N{\mathbb{N}}
\newcommand\Q{\mathbb{Q}}
\newcommand\Z{\mathbb{Z}}
\newtheorem{theorem}{Theorem}[section] 
\newtheorem{proposition}[theorem]{Proposition} 
\newtheorem{lemma}[theorem]{Lemma} 
\newtheorem{corollary}[theorem]{Corollary} 
\newtheorem{thmA}{Theorem}
\newtheorem{thmB}{Theorem}
\theoremstyle{definition} 
\newtheorem{definition}[theorem]{Definition} 
\newtheorem{example}[theorem]{Example}
\theoremstyle{plain}
\def\ds{\displaystyle}
\def\p{\varphi}
\begin{document}

\title[Weighted composition operators between spaces on tree]
{Weighted composition operators acting from the Lipschitz space to the space of bounded functions on a tree}

\author[T. Hosokawa]{Takuya Hosokawa}
\address{College of Engineering, Ibaraki University, 4-12-1, Nakanarusawa, Hitachi, Ibaraki, 316-8511, Japan}
\email{takuya.hosokawa.com@vc.ibaraki.ac.jp}

\thanks{The author is partially supported by Grant-in-Aid for Scientific Research, Japan Society for the Promotion of Science (No.16K05190).}

\begin{abstract}  
We study the weighted composition operators between the Lipschitz space and the space of bounded functions on the set of vertices of an infinite tree. 
We characterized the boundedness, the compactness, and the boundedness from below of weighted composition operators. 
We also determine the isometric weighted composition operators. 
\end{abstract}

\subjclass{47B33, 47B38, 05C05.}
\keywords{tree, Lipschitz space, weighted composition operator.}

\maketitle

\section{Introduction}

A graph $G= (V, E)$ is a pair of a nonempty set $V$, which is called the vertex set, and a subset $E$ of $\{ \, [v, w] \in V \times V : v \not= w \}$, which is called the edge set. 
Two vertices $v$ and $w$ are called neighbors if $[v, w] \in E$, and we denote by $v \sim w$. 
For any vertex $v \in V$, let $\deg v$ denote the number of neighbors of $v$. 
The vertex $v$ is called terminal if $\deg v =1$. 
A path is a finite or infinite sequence $[v_0 , v_1 , \dots ]$ of vertices such that $v_k \sim v_{k+1}$ and $v_k \not= v_{k+2}$ for all $k$. 

A tree is a connected, locally finite, undirected graph with no loop, no cycle. 
Remark that for any two vertices $v, w$ there exists a unique finite path from $v$ to $w$. 
For any vertex $v , w \in V$, let the distance between $v$ and $w$ be the numbers of edges in the finite path from $v$ to $w$, and we denote by $d_T (v, w)$. 
In this paper, we assume that the tree $T$ is rooted at a vertex $o$ and has no terminal. 
Hence $T$ is an infinite graph. 

A vertex $v$ is called descendant of $w$ if $w$ lies in the unique path from $o$ to $v$. 
Then $w$ is called an ancestor of $v$. 
The set $S_v$ consisting of $v$ and all descendants of $v$ is called the sector determined by $v$. 
For $v \in T$, define that $|v| = d_T (o, v)$. 
We use the notation $T^* = T \setminus \{ o \}$. 
The parent $v^-$ of $v \in T^*$ is the unique vertex satisfying $v^- \sim v$ and $|v^-| = |v|-1$. 
Remark that for any neighbors $v \sim w$, we have the alternative of $v= w^-$ or $w = v^-$. 

In \cite{CC}, Cohen and Colonna pointed out the geometric correspondence between the hyperbolic disk and the homogeneous trees, that is, each vertex has the same number of neighbors. 
The authors described the relation in terms of the M\"{o}bius transformations and the hyperbolic tilings. 
In this line, we can regard the complex-valued functions on the vertices of trees (possibly non-homogeneous) as a discretization of the functions on the unit disk. 

The supremum norm of $f$ on $T$ will be denoted by 
\begin{equation*}
\| f \|_\infty = \sup_{v \in T} |f(v)| , 
\end{equation*} 
and the space of the bounded functions on $T$ by $L^\infty$. 
The discrete derivative of $f$ is defined by 
\begin{equation*}
D f (v) = 
\left\{ 
\begin{matrix} 
f(v) - f(v^-) & (v \not= o), \\ 
\ 0 & (v=o). 
\end{matrix}
\right.
\end{equation*} 
The set of all functions $f$ on T such that $\| Df \|_\infty < \infty$ is called the Lipschitz space, denote by $\L$. 
Then $\L$ is a Banach space under the norm 
\begin{equation*}
\| f \|_\L = |f(o)| + \| D f \|_\infty . 
\end{equation*} 
Since $\| Df \|_\infty \leq 2 \| f \|_\infty$, we have that $L^\infty \subset \L$. 
It is known that the Lipschitz functions $f$ follows the growth condition: 
\begin{equation} \label{growth}
|f(v)| \leq |f(o)| + |v| \cdot \| Df \|_\infty 
\end{equation}
(see also Lemma 3.4 of \cite{CE1}). 

Let the little Lipschitz space $\L_o$ be the subspace of $\L$ consisting of all functions $f$ with $\lim_{|v| \to \infty} Df(v) = 0$. 
Colonna and Easley \cite{CE1} proved that $\L_o$ is the closure in $\L$ of the set 
\begin{equation*}
\P = \left\{ \sum_{k=1}^n a_k \eta_{v_k} : n \in \N , v_k \in T , a_k \in \Q [i] \right\} , 
\end{equation*} 
where $\eta_v$ is the characteristic function on the sector $S_v$ determined by $v$. 
Thus $\L_o$ is a closed separable subspace of $\L$.

Let $\psi$ be a function on $T$. 
The multiplication operator $M_\psi$ is defined by 
\begin{equation*}
M_\psi f (v) = \psi (v) f(v) . 
\end{equation*}
For a self-map $\p$ of $T$, the composition operator $C_\p$ is defined by 
\begin{equation*}
C_\p f (v) = f( \p (v)) . 
\end{equation*}
Moreover, we define the weighted composition operator $\psi C_\p$ by 
\begin{equation*}
\psi C_\p f (v) = M_\psi C_\p f (v) = \psi (v) f( \p (v)) . 
\end{equation*}

For the multiplication operators, the boundedness and the compactness has been studied on some function spaces. 
Allen and Craig \cite{AC} characterized the boundedness and the compactness of multiplication operators on the weighted Banach space $L_\mu^\infty$ on $T$. 
We here present the results for the case of $L^\infty$. 
\begin{thmA}[\cite{AC}] 
Let $\psi$ be a function on $T$. 
\begin{enumerate}
\item $M_\psi$ is bounded on $L^\infty$ if and only if $\psi \in L^\infty$. 
Moreover, if $M_\psi$ is bounded on $L^\infty$, then $\| M_\psi \|_{L^\infty} = \| \psi \|_\infty$. 
\item $M_\psi$ is compact on $L^\infty$ if and only if $\psi (v) \to 0$ as $|v| \to \infty$. 
\end{enumerate}
\end{thmA}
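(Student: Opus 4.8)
The plan is to reduce both parts to sup-norm estimates of $\psi$ tested against simple functions, the whole argument resting on the fact that $T$ is locally finite. I begin with part (1). Sufficiency is immediate: if $\psi \in L^\infty$, then for every $f \in L^\infty$ one has $\| M_\psi f \|_\infty = \sup_{v} |\psi(v)|\,|f(v)| \le \| \psi \|_\infty \| f \|_\infty$, so $M_\psi$ is bounded with $\| M_\psi \|_{L^\infty} \le \| \psi \|_\infty$. For the converse and the norm identity I would test on the constant function $\mathbf{1} \equiv 1$, which lies in $L^\infty$ with $\| \mathbf{1} \|_\infty = 1$. Since $M_\psi \mathbf{1} = \psi$, boundedness of $M_\psi$ forces $\psi \in L^\infty$ together with $\| \psi \|_\infty = \| M_\psi \mathbf{1} \|_\infty \le \| M_\psi \|_{L^\infty}$. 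Combining the two inequalities gives $\| M_\psi \|_{L^\infty} = \| \psi \|_\infty$.

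For the necessity in part (2) I would argue by contraposition using singleton indicators. Write $\chi_v$ for the function equal to $1$ at $v$ and $0$ elsewhere; then $\chi_v \in L^\infty$ with $\| \chi_v \|_\infty = 1$ and $M_\psi \chi_v = \psi(v) \chi_v$. If $\psi(v) \not\to 0$ as $|v| \to \infty$, then there are $\e > 0$ and distinct vertices $v_n$ with $|v_n| \to \infty$ and $|\psi(v_n)| \ge \e$. Because the singletons $\{ v_n \}$ are pairwise disjoint, for $n \ne m$ the functions $M_\psi \chi_{v_n}$ and $M_\psi \chi_{v_m}$ have disjoint supports, whence $\| M_\psi \chi_{v_n} - M_\psi \chi_{v_m} \|_\infty = \max(|\psi(v_n)|, |\psi(v_m)|) \ge \e$. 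Thus the bounded sequence $\{ \chi_{v_n} \}$ admits no subsequence whose $M_\psi$-image is Cauchy, so $M_\psi$ is not compact.

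For the sufficiency in part (2) I would approximate $M_\psi$ in operator norm by finite-rank operators, and this is exactly where local finiteness enters. For each $n$, let $\psi_n$ be the function agreeing with $\psi$ on the ball $\{ v : |v| \le n \}$ and vanishing elsewhere. Since $T$ is locally finite, this ball is a finite set, so the range of $M_{\psi_n}$ is contained in the finite-dimensional space of functions supported on it; hence $M_{\psi_n}$ has finite rank. Applying part (1) to the symbol $\psi - \psi_n$ yields
\[
\| M_\psi - M_{\psi_n} \|_{L^\infty} = \| \psi - \psi_n \|_\infty = \sup_{|v| > n} |\psi(v)|,
\]
which tends to $0$ as $n \to \infty$ precisely because $\psi(v) \to 0$ as $|v| \to \infty$. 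Therefore $M_\psi$ is a norm limit of finite-rank operators, hence compact.

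The individual computations are routine, so I do not expect a genuine obstacle; the only structural ingredient is the local finiteness of $T$, which is what makes the truncations $M_{\psi_n}$ finite-rank and thereby closes the sufficiency direction. The one point requiring care is the necessity direction of (2): one must use test functions with pairwise disjoint supports, namely the singletons $\chi_v$, rather than the sector indicators $\eta_v$, since the latter are nested and their $M_\psi$-images need not stay uniformly separated.
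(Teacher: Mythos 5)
Your proof is correct, and it is essentially the route the paper takes: Theorem A itself is quoted from \cite{AC} without proof, but the paper proves the generalization to $\psi C_\p$ in Theorem \ref{bdd2-1}, and your argument is precisely that proof specialized to $\p = \id$ --- testing on the constant function $\mathbf{1}$ and on point indicators $\chi_v$ for the boundedness and the norm identity, and approximating by truncation to the finite balls $\{ v : |v| \le n \}$ (the paper's operators $K_N$, which satisfy $M_\psi K_N = M_{\psi_N}$ in your notation, finite rank by local finiteness) for the compactness. The only cosmetic difference is in the necessity half of (2): you show the images $M_\psi \chi_{v_n}$ are $\e$-separated and hence admit no Cauchy subsequence, whereas the paper reaches the same conclusion through the weak convergence lemma (Lemma \ref{WCL}), noting that $\chi_{v_n} \to 0$ pointwise while $\| M_\psi \chi_{v_n} \|_\infty = | \psi (v_n)| \ge \e$ --- both arguments are standard and equally valid.
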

Colonna and Easley \cite{CE2} characterized the boundedness and the compactness of the multiplication operators acting from the Lipschitz spaces to $L^\infty$. 
\begin{thmB}[\cite{CE2}] 
Let $\psi$ be a function on $T$. 
\begin{enumerate}
\item The following conditions are equivalent. 
\begin{enumerate} 
\item $M_\psi : \L \to L^\infty$ is bounded. 
\item $M_\psi : \L_o \to L^\infty$ is bounded. 
\item $\ds \sup_{v \in T} |v| | \psi (v)| < \infty$. 
\end{enumerate}
\item Suppose that $M_\psi : \L \to L^\infty$ is bounded. 
Then the following conditions are equivalent. 
\begin{enumerate} 
\item $M_\psi : \L \to L^\infty$ is compact. 
\item $M_\psi : \L_o \to L^\infty$ is compact. 
\item $\ds \lim_{|v| \to \infty} |v| |\psi (v)| \to 0$. 
\end{enumerate}
\end{enumerate}
\end{thmB}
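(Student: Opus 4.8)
The plan is to run both parts on the same two engines: the growth estimate \eqref{growth} for the sufficiency directions, and a family of norm-one test functions built from the sector characteristic functions $\eta_v$ for the necessity directions. Throughout, for a vertex $w$ with $|w|=N\geq1$ I write $o=u_0,u_1,\dots,u_N=w$ for the geodesic from the root to $w$. In part (1), the implication (c)$\Rightarrow$(a) is immediate from \eqref{growth}: for $v\neq o$ one has $|f(v)|\leq|v|\,\|f\|_\L$, while $|f(o)|\leq\|f\|_\L$, so
\[
\|M_\psi f\|_\infty=\sup_{v\in T}|\psi(v)|\,|f(v)|\leq\max\!\Big(|\psi(o)|,\ \sup_{v\in T}|v|\,|\psi(v)|\Big)\|f\|_\L,
\]
whose coefficient is finite by (c); and since $\L_o\subset\L$, (a)$\Rightarrow$(b) is automatic.

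For the remaining direction (b)$\Rightarrow$(c) I would test $M_\psi$ on $f_w=\sum_{k=1}^{N}\eta_{u_k}$. A direct computation gives $D\eta_{u_k}=\chi_{\{u_k\}}$, hence $Df_w=\chi_{\{u_1,\dots,u_N\}}$; thus $f_w(o)=0$, $\|f_w\|_\L=1$, and $Df_w$ has finite support, so $f_w\in\L_o$. Since every $u_k$ is an ancestor of $w$, we get $f_w(w)=N=|w|$, and therefore
\[
\|M_\psi\|\geq\|M_\psi f_w\|_\infty\geq|\psi(w)|\,|f_w(w)|=|w|\,|\psi(w)|.
\]
Taking the supremum over $w$ gives (c).

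In part (2), (a)$\Rightarrow$(b) is again automatic, and for (c)$\Rightarrow$(a) I would use a normal-families argument: given $\|f_n\|_\L\leq1$, the numbers $f_n(v)$ are bounded at each of the countably many vertices, so a diagonal subsequence converges pointwise to some $f\in\L$; replacing $f_n$ by $f_n-f$ reduces matters to $f_n\to0$ pointwise with $\|f_n\|_\L\leq2$. Splitting $\sup_v|\psi(v)|\,|f_n(v)|$ over the finite ball $\{\,|v|\leq N\,\}$, on which $f_n\to0$ uniformly, and its complement, on which $|\psi(v)|\,|f_n(v)|\leq2|v|\,|\psi(v)|$ is uniformly small by (c), shows $\|M_\psi f_n\|_\infty\to0$, so $M_\psi$ is compact.

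The one delicate step is (b)$\Rightarrow$(c) for compactness. I would first record the principle that a compact $M_\psi$ carries every $\L_o$-bounded sequence tending to $0$ pointwise to a sequence tending to $0$ in $\|\cdot\|_\infty$: otherwise a subsequence of images stays bounded away from $0$ yet, by compactness, converges in $L^\infty$ to a limit forced to vanish pointwise, a contradiction. The hard part is to manufacture test functions that are simultaneously bounded in $\L_o$, pointwise null, and weighted by $\psi$ to size $\approx\varepsilon$ on the bad vertices; the $f_w$ above fail the second requirement, since they need not vanish at a fixed vertex as $|w|\to\infty$. I would localize them by deleting the lower half of the path: with $m=\lfloor N/2\rfloor$, set $g_w=\sum_{k=m+1}^{N}\eta_{u_k}$, so that $\|g_w\|_\L=1$, $g_w\in\L_o$, $g_w(w)=N-m\geq|w|/2$, and $g_w(v)=0$ whenever $|v|\leq m$. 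If (c) failed, picking $w_n$ with $|w_n|\to\infty$ and $|w_n|\,|\psi(w_n)|\geq\varepsilon$ makes $g_{w_n}\to0$ pointwise while $\|M_\psi g_{w_n}\|_\infty\geq|\psi(w_n)|\,g_{w_n}(w_n)\geq\varepsilon/2$, contradicting compactness. I expect the construction and bookkeeping for these localized functions to be the main obstacle.
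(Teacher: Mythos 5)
Your proof is correct, but it takes a different route from the paper, which does not prove Theorem B directly at all: it quotes it from \cite{CE2} and observes that it is the specialization $\p(v)=v$ of Theorem \ref{bdd1} and Theorem \ref{compact1}, the latter proved via the essential norm formula of Theorem \ref{ess-norm-wc}. Concretely, where you use the sector sums $f_w=\sum_{k=1}^N \eta_{u_k}$ (i.e.\ $f_w(v)=|v\wedge w|$) for the boundedness necessity, the paper uses the radial truncations $F_N(v)=\min(|v|,N)$, evaluated at $w$ with $N>|w|$ --- same idea, interchangeable test functions. For (c)$\Rightarrow$(a) in part (2), the paper composes with the explicit compact truncation operators $K_n$ (projection onto values along the geodesic at level $n$) and estimates $|f(v)-f(v_n)|\leq(|v|-n)\|f\|_\L$, invoking the weak convergence Lemma \ref{WCL} quoted from \cite{AC}; your diagonal/normal-families argument replaces both the $K_n$'s and the lemma, and in fact your ``principle'' in the last paragraph is precisely the easy direction of Lemma \ref{WCL}, proved inline --- so your version is more self-contained. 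The most genuine divergence is in (b)$\Rightarrow$(c): the paper's test functions $g_{n,r}$ are radial, vanish for $|v|<\sqrt{n}$, interpolate with an $r$-power profile, have $\|g_{n,r}\|_\L\to r+1$, and require the limit $r\to 0$; this delicacy is needed because the paper is after the sharp essential norm identity, not just compactness. Your half-path functions $g_w=\sum_{k=m+1}^N\eta_{u_k}$, with exact norm $1$, pointwise convergence to $0$, and $g_w(w)\geq|w|/2$, are simpler and entirely sufficient for the qualitative equivalence (the factor $1/2$ is harmless there), but they would only recover the essential norm up to a factor $2$ rather than exactly. All your verifications check out: $D\eta_{u_k}=\chi_{\{u_k\}}$ for $u_k\neq o$, so $\|f_w\|_\L=\|g_w\|_\L=1$ with both in $\L_o$; the coefficient $\max\bigl(|\psi(o)|,\sup_v|v||\psi(v)|\bigr)$ is finite under (c) since $|\psi(o)|$ is finite automatically; and the pointwise limit in your diagonal argument lies in $\L$ because $\L$-norm bounds pass to pointwise limits.
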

In \cite{ACE}, Allen, Colonna, and Easley also studied the composition operators on the Lipschitz space $\L$. 

The following is called the weak convergence lemma. 
Allen and Craig introduced similar result on more general settings (Lemma 2.5 in \cite{AC}). 
\begin{lemma} \label{WCL}
Let $X$ and $Y$ be $\L$, $\L_o$, or $L^\infty$. Let $A$ be a bounded operator acting from $X$ to $Y$. 
Then $A$ is compact if and only if $\| A f_n \|_Y \to 0$ for every bounded sequence $\{ f_n \}$ in $X$ converging to $0$ pointwise. 
\end{lemma}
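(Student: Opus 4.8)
The plan is to lean on two structural features shared by $\L$, $\L_o$, and $L^\infty$: each is a Banach space, and at every vertex $v$ the evaluation functional $\delta_v : f \mapsto f(v)$ is bounded. The latter is clear for $L^\infty$ since $|f(v)| \le \|f\|_\infty$, and for $\L$ (hence for its subspace $\L_o$) it follows from the growth estimate (\ref{growth}), which gives $|f(v)| \le (1+|v|)\,\|f\|_\L$. Consequently, norm convergence in any of the three spaces implies pointwise convergence, and this is the bridge I use on both sides of the equivalence.

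For the sufficiency of the sequential condition, let $\{f_n\}$ be a sequence in $X$ with $\|f_n\|_X \le M$. Since $T$ is countable and each $\delta_v$ is bounded, $\{f_n(v)\}_n$ is a bounded scalar sequence for every $v$, so a diagonal argument yields a subsequence $\{f_{n_k}\}$ converging pointwise on all of $T$ to some function $f$. I would then show $\{Af_{n_k}\}$ is Cauchy in $Y$ without having to decide whether $f$ lies in $X$ (which it need not, when $X=\L_o$): if it were not Cauchy, there would be $\e>0$ and indices $p_i<q_i\to\infty$ with $\|Af_{p_i}-Af_{q_i}\|_Y \ge \e$, yet $g_i := f_{p_i}-f_{q_i}$ is bounded in $X$ and tends to $0$ pointwise, so the hypothesis forces $\|Ag_i\|_Y \to 0$, a contradiction. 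As $Y$ is complete, $\{Af_{n_k}\}$ converges, so $A$ is compact.

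For the necessity, assume $A$ is compact and let $\{f_n\}$ be bounded in $X$ with $f_n \to 0$ pointwise. To obtain $\|Af_n\|_Y \to 0$ it suffices to show every subsequence has a further subsequence whose images have norm tending to $0$. Given a subsequence, compactness extracts $\{f_{m_j}\}$ with $Af_{m_j} \to g$ in $Y$, and since evaluations are bounded on $Y$ we also have $Af_{m_j}(v) \to g(v)$ for each $v$. It remains to identify $g$, and this is the step I expect to be the main obstacle: one must show $Af_{m_j}\to 0$ pointwise, equivalently that each functional $f\mapsto (Af)(v)=\delta_v(Af)$ is continuous along bounded, pointwise-null sequences. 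My plan is to deduce this from weak convergence: through the isometry $f\mapsto (f(o),Df)$ the dual of $\L_o$ is $\mathbb{C}\oplus\ell^1(T^*)$, so a bounded pointwise-null sequence is weakly null in $\L_o$ by dominated convergence, and since compact operators are completely continuous this gives $Af_{m_j}\to 0$ in norm and hence $g=0$. For $X=\L$ and $X=L^\infty$ the same identification presents these as dual spaces with the sequences only weak-$*$ null; the delicate point is that weak-$*$ nullity, unlike weak nullity, need not be preserved by $\delta_v\circ A$, so this case requires genuinely verifying that the pointwise limit of $Af_{m_j}$ vanishes rather than appealing to complete continuity. That verification is the crux of the lemma, and I would carry it out using the concrete duality above together with the boundedness of the evaluations on $Y$.
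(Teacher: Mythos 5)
Your sufficiency half is complete and correct, and the Cauchy-by-contradiction device is a genuinely good touch: it sidesteps the fact that a pointwise limit of a bounded sequence in $\L_o$ need not lie in $\L_o$, which is the usual stumbling block in this direction. Your necessity argument for $X=\L_o$ is also sound: via $f\mapsto (f(o),Df)$ one has $\L_o\cong \mathbb{C}\oplus_1 c_0(T^*)$, so a bounded pointwise-null sequence is weakly null by dominated convergence against $\ell^1(T^*)$, and compact operators are completely continuous. But the step you postponed for $X=\L$ and $X=L^\infty$ is not merely delicate --- it cannot be carried out, because the ``only if'' direction of the lemma as stated is \emph{false} for a general compact operator on those spaces. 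Since $T$ is locally finite, each ball $\{|v|\le n\}$ is finite, so the tails $\{|v|\ge n\}$ lie in some free ultrafilter $\mathcal{U}$ on $T$; let $\Lambda f=\lim_{\mathcal{U}}f(v)$, a bounded functional on $L^\infty$, fix $0\ne h\in Y$, and set $Af=\Lambda(f)\,h$. Then $A$ is rank one, hence compact, yet $f_n=\chi_{\{|v|\ge n\}}$ is bounded, converges to $0$ pointwise, and satisfies $\Lambda(f_n)=1$, so $\|Af_n\|_Y=\|h\|_Y\not\to 0$. For $X=\L$ the same trick works with $f_n(v)=\max\{|v|-n,0\}$ (so $\|f_n\|_\L=1$, $f_n\to 0$ pointwise) and the functional $f\mapsto \lim_{\mathcal{U}}Df(v)$. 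This is exactly the weak-$*$ versus weak distinction you flagged: bounded pointwise-null sequences in $\L$ and $L^\infty$ are only weak-$*$ null, and no duality computation will make $\delta_v\circ A$ continuous along them for arbitrary $A$.

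For comparison: the paper gives no proof at all, deferring to Lemma 2.5 of \cite{AC}, and Tjani-type weak convergence lemmas of this kind carry an additional hypothesis that the statement above omits, namely that $A$ be continuous on norm-bounded sets with respect to pointwise convergence (equivalently, $Af_n\to 0$ pointwise whenever $\{f_n\}$ is bounded and pointwise null). Under that hypothesis your scheme closes immediately: $g(v)=\lim_j (Af_{m_j})(v)=0$ for every $v$, so $g=0$, contradicting $\|g\|_Y\ge\e$. The omission is harmless for the paper's applications: every operator to which the sufficiency direction is applied ($\psi C_\p$, the truncations $K_N$ and $K_n$) trivially satisfies the pointwise-continuity hypothesis, and the one place where necessity is invoked for an \emph{arbitrary} compact operator is in the proof of Theorem \ref{ess-norm-wc}, where the domain is $\L_o$ and the test functions $g_{n,r}$ lie in $\L_o$, so your complete-continuity argument covers precisely the use actually made. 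The correct repair is therefore to add this hypothesis (or to restrict the necessity half to $X=\L_o$), not to attempt the verification you left open.
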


Recall that a bounded linear operator $T : X \to Y$, where $X$ and $Y$ are two Banach spaces, is bounded below if there exists a positive constant $M$ such that 
\begin{equation*} 
\| T x \|_Y \geq M \| x \|_X 
\end{equation*}
for any $x \in X$. 
In \cite{Mul}, M\"{u}ller introduced two kinds of minimum moduli of bounded linear operators, one of which is related to the boundedness from below. 
\begin{definition} (p.86, \cite{Mul}) 
Let $X, Y$ be two Banach spaces and $T$ be a bounded linear operator from $X$ to $Y$. 
\begin{enumerate}
\item The injectivity modulus $j(T)$ of $T$ is defined by 
\begin{equation*}
j(T) = \inf \{ \| T x \|_Y : \| x \|_X = 1 \}. 
\end{equation*}
\item The surjectivity modulus $k(T)$ of $T$ is defined by 
\begin{equation*}
k(T) = \sup \{ r \geq 0 : T(U_X) \supset r \, U_Y \}. 
\end{equation*}
\end{enumerate}
\end{definition}
It is known that $j(T)>0$, that is, $T$ is bounded below, if and only if $T$ is injective and the range of $T$ is closed in $Y$. 
It is also known that $k(T)>0$ if and only if $T$ is surjective. 
See \cite{Mul} for more properties of those minimum moduli in general setting, and \cite{Hos} for the estimate on the minimum moduli of weighted composition operators on $H^\infty$.

Our main purpose is to study the weighted composition operators $\psi C_\p$ acting between $\L$ and $L^\infty$. 
In section 2, we will study the boundedness, the compactness of $\psi C_\p$ on $L^\infty$. 
We also characterize the isometric weighted composition operators, and the boundedness below of $\psi C_\p$ on $L^\infty$. 
Moreover, we determine the operator norm, the essential norm, and the minimum moduli of $\psi C_\p$ on $L^\infty$. 
In section 3, we will study the boundedness, the compactness of $\psi C_\p$ acting from $\L$ and $\L_o$ to $L^\infty$. 
We also show that there is no isometric weighted composition operator from $\L$ and $\L_o$ to $L^\infty$. 
Those results are generalizations of the results on $M_\psi$ given in \cite{CE2}. 
Moreover, we will give the estimate on the minimum moduli of $\psi C_\p$ from $\L$ to $L^\infty$.

\section{Results on $\psi C_\p : L^\infty \to L^\infty$} 

\subsection{Boundedness and compactness} 
\ 

\begin{theorem} \label{bdd2-1}
Let $\psi$ be a function on $T$, and $\p$ be a self-map of $T$. 
\begin{enumerate}
\item $\psi C_\p$ is bounded on $L^\infty$ if and only if $\psi \in L^\infty$. 
Moreover, if $\psi C_\p$ is bounded on $L^\infty$, then $\| \psi C_\p \| = \| \psi \|_\infty$. 
\item Suppose that $\psi C_\p$ is bounded on $L^\infty$. 
Then $\psi C_\p$ is compact on $L^\infty$ if and only if one of the following conditions holds: 
\begin{enumerate} 
\item $\p (T)$ is a finite subset of $T$, 
\item $\psi (v) \to 0$ if $| \p (v)| \to \infty$. 
\end{enumerate}
Moreover, if $\p (T)$ is an infinite subset of $T$, the following estimate holds: 
\begin{equation} \label{ess-norm-Linfty} 
\| \psi C_\p \|_e 
= \limsup_{|\p (v)| \to \infty} | \psi (v)| . 
\end{equation}
\end{enumerate}
\end{theorem}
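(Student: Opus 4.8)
The plan is to establish part (i) by a direct norm computation and then obtain part (ii) by first proving the essential-norm formula in the infinite case, from which the compactness dichotomy follows immediately.

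For part (i), the first step is to note that if $\psi \in L^\infty$ then the pointwise estimate $|\psi(v) f(\p(v))| \le \|\psi\|_\infty \|f\|_\infty$ gives $\|\psi C_\p\| \le \|\psi\|_\infty$ for every $f \in L^\infty$. Testing against the constant function $f \equiv 1$, which lies in $L^\infty$ with $\|1\|_\infty = 1$ and satisfies $\psi C_\p 1 = \psi$, supplies both the reverse inequality and the "only if" direction at once: if $\psi C_\p$ is bounded then $\psi = \psi C_\p 1 \in L^\infty$. Hence $\|\psi C_\p\| = \|\psi\|_\infty$, and this part is essentially a one-line argument.

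For part (ii), write $L = \limsup_{|\p(v)| \to \infty} |\psi(v)|$ and assume $\p(T)$ is infinite. For the upper bound $\|\psi C_\p\|_e \le L$, I would truncate: let $\psi_N$ coincide with $\psi$ on $\{v : |\p(v)| \le N\}$ and vanish elsewhere. Since $T$ is locally finite the set $\{w : |w| \le N\}$ is finite, so $\psi_N C_\p f$ depends on $f$ through only finitely many values; thus $\psi_N C_\p$ has finite rank and is compact. Applying part (i) to the difference gives $\|\psi C_\p - \psi_N C_\p\| = \|\psi - \psi_N\|_\infty = \sup_{|\p(v)| > N} |\psi(v)|$, and as $N \to \infty$ these tail suprema decrease to $L$, yielding $\|\psi C_\p\|_e \le L$. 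For the matching lower bound, choose vertices $v_k$ with $|\p(v_k)| \to \infty$ and $|\psi(v_k)| \to L$, then pass to a subsequence along which the images $w_k := \p(v_k)$ are distinct; this is possible precisely because $|\p(v_k)| \to \infty$. The characteristic functions $f_k$ of the single vertices $w_k$ form a norm-one sequence tending to $0$ pointwise, so for any compact $K$ Lemma \ref{WCL} gives $\|K f_k\|_\infty \to 0$. Since $\|\psi C_\p f_k\|_\infty \ge |\psi(v_k) f_k(w_k)| = |\psi(v_k)| \to L$, the triangle inequality gives $\|\psi C_\p - K\|_\infty \ge L$ for every compact $K$, hence $\|\psi C_\p\|_e \ge L$.

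Combining the two bounds proves \eqref{ess-norm-Linfty}. The compactness criterion then drops out: when $\p(T)$ is infinite, $\psi C_\p$ is compact iff $\|\psi C_\p\|_e = 0$, i.e. iff $\psi(v) \to 0$ as $|\p(v)| \to \infty$, which is condition (b); and when $\p(T)$ is finite the operator has finite rank and is automatically compact, which is condition (a). I expect the only delicate point to be the lower bound: one must extract the subsequence with distinct images so that $\{f_k\}$ genuinely converges to $0$ pointwise, and then rely on Lemma \ref{WCL} to neutralize the compact perturbation $K$ uniformly. The truncation for the upper bound and the direct computation in part (i) are routine.
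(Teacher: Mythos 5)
Your proposal is correct and follows essentially the same route as the paper: your truncated operator $\psi_N C_\p$ is exactly the paper's $\psi C_\p K_N$ (the paper truncates $f$ via $K_N$ rather than the symbol $\psi$, but the resulting operators coincide), and your lower bound via characteristic functions $\chi_{\p(v_k)}$ together with Lemma \ref{WCL} is the paper's argument as well. The only cosmetic differences are that you test with $f \equiv 1$ in part (i) where the paper uses characteristic functions, and you observe finite rank directly where the paper cites compactness of $K_N$; your extra care about distinct images $w_k$ is harmless but unnecessary, since $|w_k| \to \infty$ already forces pointwise convergence of $\chi_{w_k}$ to zero.
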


\begin{proof}
(i) If $\psi C_\p$ is bounded on $L^\infty$, then $\psi = \psi C_\p \, 1 \in L^\infty$. 
On the other hand, if $\psi \in L^\infty$, then we have for $f \in L^\infty$ with $\| f \|_\infty =1$, 
\begin{equation*} 
\| \psi C_\p f \|_\infty = \| \psi (v) f( \p (v)) \|_\infty \leq \| \psi (v) \|_\infty . 
\end{equation*}
Hence we have $\psi C_\p$ is bounded on $L^\infty$ if and only if $\psi \in L^\infty$. 

Next we let $\chi_{\p (w)}$ be the characteristic function at $\p (w) \in T$. 
Then we get 
\begin{equation*}
\| \psi C_\p \| \geq \| \psi (v) \chi_{\p (w)} (\p (v)) \|_\infty \geq | \psi (w) \chi_{\p (w)} (\p (w))| = | \psi (w)| . 
\end{equation*} 
Taking the supremum over all $w \in T$, we obtain 
\begin{equation*}
\| \psi C_\p \| \geq \| \psi \|_\infty . 
\end{equation*}
We conclude that $ \| \psi C_\p \| = \| \psi \|_\infty$.

(ii) We remark that $\| \psi \|_\infty < \infty$ since $\psi C_\p$ is bounded on $L^\infty$. 
Let $\{ f_n \}$ be an arbitrary bounded sequence in $L^\infty$ converging to $0$ pointwise. 
Write $\| f_n \|_\infty < M$ for any $n$. 

Suppose that $\p (T)$ is a finite set. 
We have 
\begin{equation*}
\| \psi C_\p f_n \|_\infty 
= \sup_{v \in T} | \psi (v) f_n (\p (v)) | 
\leq \| \psi \|_\infty \cdot \sup_{v \in \p (T)} |f_n (v)| \to 0 . 
\end{equation*} 
By Lemma \ref{WCL}, we have $\psi C_\p$ is compact on $L^\infty$. 

We here suppose that $\p (T)$ is an infinite subset of $T$. 
Then it is enough to prove \eqref{ess-norm-Linfty}. 
Let $K$ be a compact operator on $L^\infty$. 
For any sequence $\{ v_n \}$ in $T$ such that $|\p (v_n)| \to \infty$, we have 
\begin{equation*}
\| ( \psi C_\p +K) \| 
\geq \| ( \psi C_\p +K) \chi_{\p(v_n)} \|_\infty 
\geq | \psi (v_n)| - \| K \chi_{\p (v_n)} \|_\infty . 
\end{equation*} 
Taking the limit superior over $n$ and the infimum over all compact operators $K$, we obtain 
\begin{equation*}
\| \psi C_\p \|_e \geq \limsup_{n \to \infty} | \psi (v_n)| . 
\end{equation*} 

Next we prove the converse. 
Let $N$ be a positive integer and define the operator $K_N$ by 
\begin{equation*}
K_N f (v) = \left\{ 
\begin{matrix} 
f(v) & \ (|v| \leq N ) \\ 
0 & \ ( |v| >N ) 
\end{matrix} \right. . 
\end{equation*}
It is easy to see that $K_N$ is compact on $L^\infty$ (see the proof of Theorem 3.7 in \cite{CE2}). 
Since $\psi C_\p K_N$ is also compact on $L^\infty$, we have 
\begin{eqnarray*}
\| \psi C_\p \|_e 
& \leq & \| ( \psi C_\p - \psi C_\p K_N) \| 
\ \leq \ \sup_{\| f \|_\infty =1} \| ( \psi C_\p - \psi C_\p K_N) f \|_\infty \\ 
& = & \sup_{\| f \|_\infty =1} \sup_{| \p (v) | > N} | \psi (v) f( \p (v)) | 
\ \leq \ \sup_{| \p (v) | > N} | \psi (v)| . 
\end{eqnarray*} 
Letting $N \to \infty$, we get 
\begin{equation*}
\| \psi C_\p \|_e \leq \limsup_{n \to \infty} | \psi (v_n)| . 
\end{equation*} 
This completes the proof. 
\end{proof}

\begin{corollary} \label{cor-norm-Linfty}
Let $\psi$ be a function on $T$ and $\p$ be a self-map of $T$. 
\begin{enumerate}
\item $\| M_\psi \| = \| \psi \|_\infty$. 
\item $\| C_\p \| = 1$. 
\end{enumerate}
\end{corollary}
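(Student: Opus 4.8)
The plan is to recognize that both $M_\psi$ and $C_\p$ are special cases of the weighted composition operator $\psi C_\p$, and then simply invoke the operator-norm identity $\| \psi C_\p \| = \| \psi \|_\infty$ established in Theorem \ref{bdd2-1}(i). The entire corollary is an instance of specializing one of the two factors $\psi$ and $\p$.

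For part (i), I would take $\p = \id$, the identity self-map of $T$. A one-line check shows
\begin{equation*}
\psi C_{\id} f (v) = \psi (v) f ( \id (v)) = \psi (v) f(v) = M_\psi f(v) ,
\end{equation*}
so that $M_\psi = \psi C_{\id}$ as operators on $L^\infty$. Applying Theorem \ref{bdd2-1}(i) to the weighted composition operator $\psi C_{\id}$ then yields $\| M_\psi \| = \| \psi \|_\infty$ directly (implicitly assuming $\psi \in L^\infty$, so that the operator is bounded; otherwise both sides are infinite). Note that this already recovers the norm formula of Theorem A(1).

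For part (ii), I would instead take $\psi \equiv 1$. Then $C_\p = 1 \cdot C_\p$ is precisely the weighted composition operator with constant weight $1$, and since the constant function $1$ belongs to $L^\infty$, Theorem \ref{bdd2-1}(i) applies and gives
\begin{equation*}
\| C_\p \| = \| 1 \|_\infty = 1 .
\end{equation*}

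There is no genuine obstacle here: the corollary is immediate once one reads off $M_\psi$ and $C_\p$ as the two natural degenerations of $\psi C_\p$. The only thing to confirm is that the substitutions $\p = \id$ and $\psi = 1$ really reproduce $M_\psi$ and $C_\p$, which is the trivial computation displayed above in each case.
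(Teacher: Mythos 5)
Your proposal is correct and is exactly the intended derivation: the paper states Corollary \ref{cor-norm-Linfty} without proof as an immediate specialization of Theorem \ref{bdd2-1}(i), namely $M_\psi = \psi C_{\id}$ for part (i) and $C_\p = 1\cdot C_\p$ with $\|1\|_\infty = 1$ for part (ii), just as you argue. Your parenthetical remark about assuming $\psi \in L^\infty$ in part (i) is a reasonable reading of the otherwise unstated boundedness hypothesis.
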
 

Recall that a tree $T$ is locally finite, and hence the self-map $\p$ of $T$ has finite range if and only if 
\begin{equation*} 
\sup_{v \in T} |\p (v)| < \infty . 
\end{equation*} 

\begin{corollary} \label{cor-compact-Linfty}
Let $\psi$ be a bounded function on $T$ and $\p$ be a self-map of $T$. 
\begin{enumerate}
\item $M_\psi$ is compact on $L^\infty$ if and only if 
\begin{equation*}
\limsup_{|v| \to \infty} \, | \psi (v)| =0 . 
\end{equation*}
\item $C_\p$ is compact on $L^\infty$ if and only if $\p$ has finite range in $T$. 
\end{enumerate}
\end{corollary}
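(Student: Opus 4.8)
The plan is to obtain both statements as direct specializations of Theorem \ref{bdd2-1}(ii), recalling throughout that a bounded operator on a Banach space is compact if and only if its essential norm vanishes.

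For (i), I would set $\p = \id$, so that $M_\psi = \psi C_{\id}$. Since $T$ is infinite and has no terminal, the range $\p (T) = T$ is an infinite subset of $T$; thus condition (a) of Theorem \ref{bdd2-1}(ii) fails and the essential-norm formula \eqref{ess-norm-Linfty} applies. Because $|\p (v)| = |v|$ in this case, that formula reads
\begin{equation*}
\| M_\psi \|_e = \limsup_{|v| \to \infty} |\psi (v)| .
\end{equation*}
Hence $M_\psi$ is compact if and only if this limit superior is $0$, which is exactly the stated condition.

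For (ii), I would take $\psi \equiv 1$, so that $C_\p = \psi C_\p$ is bounded with $\| \psi \|_\infty = 1$ and Theorem \ref{bdd2-1}(ii) is available. If $\p$ has finite range, then $\p (T)$ is a finite subset of $T$, so condition (a) holds and $C_\p$ is compact. Conversely, if $\p$ has infinite range, then $\p (T)$ is infinite and \eqref{ess-norm-Linfty} gives
\begin{equation*}
\| C_\p \|_e = \limsup_{|\p (v)| \to \infty} |\psi (v)| = 1 ,
\end{equation*}
so $C_\p$ is not compact. By the remark preceding the statement, local finiteness of $T$ ensures that $\p$ having finite range is equivalent to $\sup_{v \in T} |\p (v)| < \infty$, matching condition (a).

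Since the corollary is merely a specialization of the theorem, no genuine difficulty arises; the only point requiring a little care is the vacuous status of condition (b) of Theorem \ref{bdd2-1}(ii) when $\psi \equiv 1$, which I circumvent by invoking the essential-norm formula instead and reading off $\| C_\p \|_e = 1$ directly in the infinite-range case.
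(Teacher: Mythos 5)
Your proposal is correct and follows essentially the same route as the paper, which states this result as an immediate specialization of Theorem \ref{bdd2-1}(ii) (taking $\p = \id$ for $M_\psi$ and $\psi \equiv 1$ for $C_\p$) without further argument. Your use of the essential-norm formula \eqref{ess-norm-Linfty} to read off $\| C_\p \|_e = 1$ in the infinite-range case is a clean way to handle the converse, and it matches the paper's own zero-one-law corollary that immediately follows.
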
 

Moreover, we have the zero-one law on the essential norm of $\psi C_\p$. 
\begin{corollary} 
Let $\p$ be a self-map of $T$. 
Then 
\begin{equation*}
\| C_\p \|_e = \left\{ 
\begin{matrix} 
0 & \ ( \mbox{if $\p$ has finite range} ) \hspace{18mm} \\ 
1 & \ ( \mbox{if $\p$ does not have finite range} ) 
\end{matrix} \right. . 
\end{equation*}
\end{corollary}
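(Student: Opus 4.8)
The plan is to derive this as a direct specialization of Theorem \ref{bdd2-1}(ii), observing that $C_\p = \psi C_\p$ with the constant weight $\psi \equiv 1$. Since $\|\psi\|_\infty = 1 < \infty$, part (i) of that theorem guarantees $C_\p$ is bounded on $L^\infty$, so its essential norm $\|C_\p\|_e$ is well-defined and we are entitled to invoke part (ii). I would then split into the two cases dictated by the statement.

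First I would treat the case where $\p$ has finite range. By the remark preceding the corollary, local finiteness of $T$ ensures that finite range is equivalent to $\p(T)$ being a finite subset of $T$, which is exactly condition (a) of Theorem \ref{bdd2-1}(ii). Hence $C_\p$ is compact on $L^\infty$, and compactness forces $\|C_\p\|_e = 0$, settling the first line.

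For the case where $\p$ does not have finite range, $\p(T)$ is an infinite subset of $T$, so the essential-norm formula \eqref{ess-norm-Linfty} applies and gives
\begin{equation*}
\| C_\p \|_e = \limsup_{|\p (v)| \to \infty} | \psi (v)| = \limsup_{|\p (v)| \to \infty} 1 .
\end{equation*}
Here I would note that, since $T$ is locally finite and $\p(T)$ is infinite, the set $\{ |\p(v)| : v \in T \}$ is unbounded, so there genuinely exists a sequence $\{v_n\}$ with $|\p(v_n)| \to \infty$; along any such sequence the integrand equals $1$, and the limit superior is therefore exactly $1$. This yields the second line and completes the proof.

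I do not expect a serious obstacle, as the result is a clean corollary of the preceding theorem; the only point requiring a moment's care is the passage from ``finite range'' to ``$\p(T)$ finite,'' which relies on local finiteness of the tree and is already recorded in the displayed remark, together with verifying that the limit superior in \eqref{ess-norm-Linfty} is taken over a nonempty index set when the range is infinite.
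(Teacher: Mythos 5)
Your proposal is correct and follows exactly the route the paper intends: the corollary is stated without proof precisely because it is the specialization $\psi \equiv 1$ of Theorem \ref{bdd2-1}(ii), with the finite-range case handled by compactness (condition (a)) and the infinite-range case by the essential norm formula \eqref{ess-norm-Linfty}. Your added checks --- that ``finite range'' matches ``$\p(T)$ finite'' via local finiteness (the remark before Corollary \ref{cor-compact-Linfty}), and that the limit superior in \eqref{ess-norm-Linfty} ranges over a nonempty set when $\p(T)$ is infinite --- are exactly the right details to verify and are both sound.
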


\subsection{Isometries}

\begin{theorem} \label{isometry1}
Let $\psi$ be a function on $T$ and $\p$ be a self-map of $T$. 
Then $\psi C_\p$ is an isometry on $L^\infty$ if and only if $\p$ is surjective and 
\begin{equation} \label{equ-iso1}
\sup_{v \in \p^{-1} (w)} | \psi (v)| =1 
\end{equation}
for any $w \in T$. 
\end{theorem}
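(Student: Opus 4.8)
The plan is to reduce everything to the behavior of $\psi C_\p$ on characteristic functions $\chi_w$ of single vertices, together with a fiberwise decomposition of the supremum norm. The key observation is that for any $f \in L^\infty$, grouping the vertices $v$ according to the value $\p (v)$ yields the identity
\[
\| \psi C_\p f \|_\infty = \sup_{v \in T} | \psi (v)| \, |f(\p (v))| = \sup_{w \in \p (T)} |f(w)| \sup_{v \in \p^{-1}(w)} | \psi (v)| .
\]
Writing $c_w := \sup_{v \in \p^{-1}(w)} | \psi (v)|$, the isometry condition becomes the question of when $\sup_{w \in \p (T)} c_w |f(w)| = \| f \|_\infty$ holds for every $f$.

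For the necessity of the two conditions I would argue by testing against characteristic functions. If some $w_0 \notin \p (T)$, then $\psi C_\p \chi_{w_0} \equiv 0$ while $\| \chi_{w_0} \|_\infty = 1$, which forces $\p$ to be surjective. Having established $\p (T) = T$, I would test with $f = \chi_w$ for an arbitrary $w \in T$: here $\| \chi_w \|_\infty = 1$ and the displayed identity gives $\| \psi C_\p \chi_w \|_\infty = c_w$, so the isometry property forces $c_w = 1$, which is precisely \eqref{equ-iso1}.

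For sufficiency, assuming $\p$ is surjective and $c_w = 1$ for all $w$, I would substitute into the fiberwise identity: since $\p (T) = T$ and each $c_w = 1$, the right-hand side collapses to $\sup_{w \in T} |f(w)| = \| f \|_\infty$, giving the isometry directly for every $f \in L^\infty$.

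This proof is almost entirely bookkeeping, and the only point requiring a little care is the fiberwise rearrangement of the supremum; in particular, one should note that the identity remains valid even when the supremum $c_w$ over a fiber is not attained, since suprema behave well under regrouping of the index set. I do not anticipate a genuine obstacle: the characteristic functions are the natural extremal test functions, and they simultaneously detect the failure of surjectivity and any deficiency in the fiberwise supremum of $| \psi |$.
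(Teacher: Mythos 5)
Your proof is correct, and the necessity half is identical to the paper's: both test $\psi C_\p$ against the characteristic functions $\chi_w$, detecting non-surjectivity via $\psi C_\p \chi_{w_0} \equiv 0$ and deficiency of $c_w = \sup_{v \in \p^{-1}(w)} |\psi(v)|$ via $\| \psi C_\p \chi_w \|_\infty = c_w$. Where you genuinely diverge is in the sufficiency direction. The paper argues with a two-sided estimate: it first deduces $\| \psi \|_\infty = 1$ from \eqref{equ-iso1} and surjectivity to get $\| \psi C_\p f \|_\infty \leq \| f \|_\infty$, and then extracts a near-maximizing sequence $\{ w_n \}$ with $|f(w_n)| \to \| f \|_\infty$ to obtain the matching lower bound. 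You instead establish the exact fiberwise identity
\[
\| \psi C_\p f \|_\infty = \sup_{w \in \p (T)} c_w \, |f(w)| ,
\]
valid for every $f \in L^\infty$ because $|f(w)|$ is constant on each fiber $\p^{-1}(w)$, so the regrouped supremum factors; substituting $\p(T) = T$ and $c_w \equiv 1$ then gives the isometry in one step, with no maximizing sequence and no separate upper bound. Your version is cleaner and unifies both directions under a single formula (it also recovers the paper's formula $\| \psi C_\p \| = \| \psi \|_\infty$ and the injectivity modulus computation of Theorem \ref{inj-mod} as byproducts), at the cost of one point requiring the care you correctly flagged: the interchange $\sup_{v \in T} = \sup_{w \in \p(T)} \sup_{v \in \p^{-1}(w)}$ holds even when the fiber suprema are not attained. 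No gap.
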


\begin{proof}
Suppose $\psi C_\p$ is an isometry on $L^\infty$. 
Since $\| \chi_w \|_\infty =1$ for $w \in T$, we have 
\begin{eqnarray*}
1 & = & \| \psi C_\p \chi_w \|_\infty 
\ = \ \sup_{v \in T} | \psi (v) \cdot \chi_w ( \p (v))| \\ 
& = & \left\{ 
\begin{matrix} 
0 & \ ( \mbox{if $w \not\in \p (T)$} ) \\ 
\displaystyle \sup_{v \in \p^{-1} (w)} | \psi (v)| & \ ( \mbox{if $w \in \p (T)$} )
\end{matrix} \right. . 
\end{eqnarray*}
Thus we get $\p$ is surjective and \eqref{equ-iso1} for any $w \in T$. 

To prove the converse, we assume $\p$ is surjective and \eqref{equ-iso1} for any $w \in T$. 
It follows that $\| \psi \|_\infty =1$, and hence we have $\| \psi C_\p f \|_\infty \leq \| f \|_\infty$ for any $f \in L^\infty$. 
Let $\{ w_n \}$ be a sequence in $T$ such that $|f(w_n)| \to \| f \|_\infty$. 
Then we have 
\begin{eqnarray*}
\| \psi C_\p f \|_\infty 
& \geq & \lim_{n \to \infty} \sup_{v \in \p^{-1} (w_n)} | \psi (v) \cdot f( \p (v))| \\ 
& = & \lim_{n \to \infty} \left( \sup_{v \in \p^{-1} (w_n)} | \psi (v)| \right) \cdot |f(w_n)| \\ 
& = & \lim_{n \to \infty} |f(w_n)| = \| f \|_\infty . 
\end{eqnarray*}
We get $\psi C_\p$ is an isometry on $L^\infty$. 
\end{proof}

\begin{example}
Let $\Z$ be the set of all integers. 
Define $\p$ be a self-map of $\Z$ by 
\begin{equation*}
\p (n) = \left\{ 
\begin{matrix} 
n & \ ( n \geq 0 ) \hspace{18mm} \\ 
-n & \ ( n = -1, -3, \cdots ) \vspace{2mm} \\ 
\ds \frac{\, n \,}{2} & \ ( n = -2, -4, \cdots) 
\end{matrix} \right. . 
\end{equation*}
Clearly, $\p$ maps $\Z$ onto $\Z$. 
Moreover, we define a function $\psi$ on $\Z$ by 
\begin{equation*}
\psi (n) = \left\{ \, 
\begin{matrix} 
0 & \ ( n = -1, -3, \cdots ) \vspace{2mm} \\ 
1 & \ ( \mbox{otherwise} ) \hspace{10mm}
\end{matrix} \right. . 
\end{equation*} 
Since $\psi$ holds \eqref{equ-iso1} for any $w \in \Z$, we have $\psi C_\p$ is an isometry on $L^\infty (\Z)$. 
\end{example}

\begin{corollary} \label{cor-iso-Linfty}
Let $\psi$ be a function on $T$ and $\p$ be a self-map of $T$. 
\begin{enumerate}
\item $M_\psi$ is an isometry on $L^\infty$ if and only if $| \psi | \equiv 1$ on $T$. 
\item $C_\p$ is an isometry on $L^\infty$ if and only if $\p$ is surjective. 
\end{enumerate}
\end{corollary}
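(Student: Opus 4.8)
The plan is to obtain both statements directly as specializations of Theorem \ref{isometry1}, exploiting that $M_\psi = \psi C_\p$ with $\p = \id$ and that $C_\p = \psi C_\p$ with $\psi \equiv 1$. In each case the two hypotheses of Theorem \ref{isometry1}, namely the surjectivity of $\p$ and condition \eqref{equ-iso1}, collapse so that only one of them carries content.

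For part (i) I would set $\p = \id$, the identity self-map of $T$. The identity map is trivially surjective, so the surjectivity hypothesis of Theorem \ref{isometry1} is automatically satisfied, and the entire criterion reduces to \eqref{equ-iso1}. Since $\p^{-1}(w) = \{ w \}$ for every $w \in T$, the supremum appearing in \eqref{equ-iso1} is just $|\psi (w)|$, so \eqref{equ-iso1} becomes $|\psi (w)| = 1$ for all $w$, that is, $|\psi| \equiv 1$ on $T$. Conversely, if $|\psi| \equiv 1$ then both hypotheses hold and $M_\psi$ is an isometry.

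For part (ii) I would instead set $\psi \equiv 1$. For each $w \in \p (T)$ the preimage $\p^{-1}(w)$ is nonempty, so $\sup_{v \in \p^{-1}(w)} |\psi (v)| = 1$ holds automatically; thus \eqref{equ-iso1} imposes no further restriction beyond asking that every $w$ lie in the range of $\p$. Consequently Theorem \ref{isometry1} asserts that $C_\p$ is an isometry on $L^\infty$ exactly when $\p$ is surjective.

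There is no real obstacle here, as both claims are immediate corollaries; the only point requiring a modicum of care is the bookkeeping on the preimage sets, together with the convention that a supremum over the empty set cannot equal $1$. This convention is precisely what makes surjectivity the decisive condition: when some $w \notin \p (T)$, the image of $\chi_w$ under $\psi C_\p$ vanishes, so $\| \psi C_\p \chi_w \|_\infty = 0 \neq 1 = \| \chi_w \|_\infty$, and the isometry property fails. This is exactly the mechanism already isolated in the proof of Theorem \ref{isometry1}.
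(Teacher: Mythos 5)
Your proposal is correct and follows exactly the route the paper intends: the corollary is stated without proof as an immediate specialization of Theorem \ref{isometry1}, with $\p = \id$ giving $\p^{-1}(w) = \{ w \}$ so that \eqref{equ-iso1} reduces to $|\psi| \equiv 1$, and with $\psi \equiv 1$ making \eqref{equ-iso1} automatic on $\p(T)$ so that only surjectivity of $\p$ remains. Your remark about the empty preimage is the same mechanism the paper isolates in the first half of the proof of Theorem \ref{isometry1}, so nothing is missing.
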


\subsection{Boundedness from below and minimum moduli} \ 

We determine the minimum moduli $j( \psi C_\p )$ and $k( \psi C_\p )$ for bounded weighted composition operators $\psi C_\p$ on $L^\infty$. 
\begin{theorem} \label{inj-mod}
Let $\psi$ be a bounded function on $T$ and $\p$ be a self-map of $T$. 
\begin{enumerate}
\item If $\p$ is not surjective, then $j( \psi C_\p) =0$. 
\item If $\p$ is surjective, then 
\begin{equation} \label{est-inj}
j( \psi C_\p) = \inf_{w \in T} \left( \sup_{v \in \p^{-1} (w)} | \psi (v)| \right) . 
\end{equation}
\end{enumerate}
\end{theorem}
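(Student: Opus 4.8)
The plan is to handle the two cases separately, the surjective case resting on a single norm identity obtained by grouping the supremum that defines $\| \psi C_\p f \|_\infty$ over the fibers of $\p$. Throughout I recall that $j( \psi C_\p) = \inf \{ \| \psi C_\p f \|_\infty : \| f \|_\infty = 1 \}$ and write $\chi_w$ for the characteristic function at $w \in T$.

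For part (i), I would exploit non-surjectivity directly to exhibit a unit vector killed by the operator. If $\p$ is not surjective, choose $w_0 \in T \setminus \p (T)$. Then $\p (v) \not= w_0$ for every $v$, so $\chi_{w_0} ( \p (v)) = 0$ and hence $\psi C_\p \chi_{w_0} \equiv 0$. Since $\| \chi_{w_0} \|_\infty = 1$, the definition of the injectivity modulus forces $j( \psi C_\p) \leq \| \psi C_\p \chi_{w_0} \|_\infty = 0$, i.e. $j( \psi C_\p) = 0$ (equivalently, $\psi C_\p$ is not injective).

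For part (ii), write $\s (w) = \sup_{v \in \p^{-1} (w)} | \psi (v)|$ and $\delta = \inf_{w \in T} \s (w)$, noting $\s (w) \leq \| \psi \|_\infty < \infty$. The crux is the identity
\[
\| \psi C_\p f \|_\infty = \sup_{w \in T} |f(w)| \, \s (w) \qquad (f \in L^\infty).
\]
Since $\p$ is surjective, the nonempty fibers $\p^{-1} (w)$, $w \in T$, partition $T$, so the supremum over $v \in T$ in $\| \psi C_\p f \|_\infty = \sup_{v} | \psi (v)| \, |f( \p (v))|$ splits into a supremum over $w \in T$ of a supremum over $v \in \p^{-1} (w)$; on each fiber $\p (v) = w$ is constant, so $f( \p (v)) = f(w)$ factors out and the displayed identity follows. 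Granting it, the lower bound is immediate: as $\s (w) \geq \delta$ for all $w$, any $f$ with $\| f \|_\infty = 1$ satisfies $\| \psi C_\p f \|_\infty = \sup_w |f(w)| \, \s (w) \geq \delta \sup_w |f(w)| = \delta$, whence $j( \psi C_\p) \geq \delta$. For the reverse inequality I would test the unit vectors $\chi_w$: the identity (or the direct computation already carried out in the proof of Theorem \ref{isometry1}) gives $\| \psi C_\p \chi_w \|_\infty = \s (w)$, so $j( \psi C_\p) \leq \s (w)$ for each $w$, and taking the infimum over $w$ yields $j( \psi C_\p) \leq \delta$. Combining the two bounds gives \eqref{est-inj}.

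The calculations are routine; the only point demanding care is the fiber decomposition, where surjectivity is precisely what guarantees that the fibers cover all of $T$ and that the infimum defining $\delta$ ranges over the same index set $T$ appearing in the norm identity. Were $\p$ not surjective, an extra $w_0 \notin \p (T)$ would have empty fiber and contribute $\s (w_0) = 0$, collapsing the infimum to $0$ — which is exactly the phenomenon isolated in part (i), and explains why the two cases are stated apart.
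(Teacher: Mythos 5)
Your proposal is correct and follows essentially the same route as the paper: part (i) uses the same test function $\chi_{w_0}$ with empty-fiber vanishing, and in part (ii) your fiber-decomposition identity $\| \psi C_\p f \|_\infty = \sup_{w} |f(w)| \, \s (w)$ is just a cleaner packaging of the paper's two-step computation, which bounds $\| \psi C_\p f \|_\infty \geq \bigl( \sup_{v \in \p^{-1}(w)} | \psi (v)| \bigr) |f(w)|$ for each fixed $w$ and then takes the supremum, while the upper bound via testing on the unit vectors $\chi_w$ is identical in both.
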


\begin{proof} 
(i) Suppose $\p$ is not surjective. 
For $w \not\in \p (T)$, we have $\psi C_\p \chi_w \equiv 0$ on $T$. 
This implies that $j( \psi C_\p) =0$. 

(ii) Suppose $\p$ is surjective. 
Let $M$ be the infimum of right side of \eqref{est-inj}. 
For any $w \in T$, the preimage $\p^{-1} (w)$ is not the empty set and 
\begin{equation*}
\| \psi C_\p \chi_w \|_\infty = \sup_{v \in \p^{-1} (w)} | \psi (v)| . 
\end{equation*}
Thus we get 
\begin{equation*}
j( \psi C_\p) \leq \inf_{w \in T} \, \| \psi C_\p \chi_w \|_\infty = M . 
\end{equation*}

Next we will show $j( \psi C_\p ) \geq M$. 
For any $w \in T$ and any $f \in L^\infty$ with $\| f \|_\infty =1$, we have 
\begin{eqnarray*}
\| \psi C_\p f \|_\infty 
& = & \sup_{v \in T} | \psi (v) \cdot f ( \p (v))| \\ 
& \geq & \sup_{v \in \p^{-1} (w)} | \psi (v) \cdot f ( \p (v))| \\ 
& = & \left( \sup_{v \in \p^{-1} (w)} | \psi (v)| \right) \cdot | f (w)| 
\ \geq \ M \cdot | f (w)| . 
\end{eqnarray*}
Since $w$ is arbitrary, we have $ \| \psi C_\p f \|_\infty \geq M \| f \|_\infty$. 
Then we get $j( \psi C_\p) \geq M$. 
\end{proof}

\begin{corollary} \label{cor-bdd-below-Linfty}
Let $\psi$ be a bounded function on $T$ and $\p$ be a self-map of $T$. 
Then $\psi C_\p$ is bounded below on $L^\infty$ if and only if $\p$ is surjective on $T$ and 
\begin{equation*} 
\inf_{w \in T} \left( \sup_{v \in \p^{-1} (w)} | \psi (v)| \right) > 0 . 
\end{equation*}
\end{corollary}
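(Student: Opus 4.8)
The plan is to read the corollary straight off Theorem \ref{inj-mod}, using only the fact that an operator is bounded below exactly when its injectivity modulus is positive. First I would record the bridge between the two notions: by the definition of the injectivity modulus, $j( \psi C_\p ) = \inf \{ \| \psi C_\p f \|_\infty : \| f \|_\infty = 1 \}$ is the largest constant $M \geq 0$ for which the inequality $\| \psi C_\p f \|_\infty \geq M \| f \|_\infty$ holds for every $f \in L^\infty$. Consequently $\psi C_\p$ is bounded below on $L^\infty$ if and only if $j( \psi C_\p ) > 0$, so the whole corollary reduces to deciding when $j( \psi C_\p )$ is strictly positive.

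Next I would split on whether $\p$ is surjective. If $\p$ is not surjective, Theorem \ref{inj-mod}(i) gives $j( \psi C_\p ) = 0$, so $\psi C_\p$ is not bounded below; this already shows that surjectivity of $\p$ is a necessary condition. If $\p$ is surjective, Theorem \ref{inj-mod}(ii) evaluates $j( \psi C_\p ) = \inf_{w \in T} \left( \sup_{v \in \p^{-1}(w)} |\psi(v)| \right)$, and the requirement $j( \psi C_\p ) > 0$ is then verbatim the displayed condition in the statement. Combining the two cases produces the claimed equivalence.

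Since every analytic ingredient has already been packaged into Theorem \ref{inj-mod}, there is no genuine obstacle here; the proof is a two-line deduction. The only point demanding care is the logical bookkeeping: in the non-surjective branch one must conclude ``not bounded below'' directly from part (i), rather than trying to read it off the formula in part (ii), which is valid only when $\p$ is surjective and hence $\p^{-1}(w)$ is nonempty for every $w \in T$.
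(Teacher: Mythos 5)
Your proposal is correct and is essentially the argument the paper intends: the corollary appears in the paper with no separate proof precisely because it is the immediate combination of Theorem \ref{inj-mod} with the observation that $j(\psi C_\p)>0$ is, by the definition of the injectivity modulus, equivalent to boundedness from below. Your care in handling the non-surjective case via part (i) rather than the formula in part (ii) is exactly the right bookkeeping, since $\p^{-1}(w)$ may be empty when $\p$ is not surjective.
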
 

\ 

We here consider the surjectivity modulus of $\psi C_\p$ on $L^\infty$. 
First, we will show that if $\psi$ has zeros on $T$, then $\psi C_\p$ is not surjective on $L^\infty$. 
\begin{proposition} \label{prop-sur}
Let $\psi$ be a bounded function on $T$ and $\p$ be a self-map of $T$. 
If there exists $v_0 \in T$ such that $\psi ( v_0 ) =0$, then $k( \psi C_\p ) = 0$. 
\end{proposition}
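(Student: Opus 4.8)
The plan is to show that the hypothesis $\psi(v_0)=0$ forces the range of $\psi C_\p$ to miss an entire coordinate of $L^\infty$, so that $\psi C_\p$ fails to be surjective; the conclusion $k(\psi C_\p)=0$ then follows at once from the characterization stated in the introduction that $k(T)>0$ if and only if $T$ is surjective.

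First I would record the key pointwise observation. For every $f\in L^\infty$,
\[
\psi C_\p f (v_0) = \psi(v_0)\, f(\p(v_0)) = 0 ,
\]
so each function in $\Ran(\psi C_\p)$ vanishes at $v_0$. Consequently $\Ran(\psi C_\p)$ is contained in the proper subspace $\{\, g\in L^\infty : g(v_0)=0 \,\}$. Next I would exhibit a function outside this subspace: the characteristic function $\chi_{v_0}$ lies in $L^\infty$ with $\chi_{v_0}(v_0)=1\neq 0$, hence $\chi_{v_0}\notin\Ran(\psi C_\p)$. Thus $\psi C_\p$ is not surjective, and the cited equivalence yields $k(\psi C_\p)=0$.

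Alternatively, one may argue straight from the definition of the surjectivity modulus, avoiding the surjectivity criterion altogether: for any $r>0$ the scaled function $\tfrac{r}{2}\chi_{v_0}$ belongs to $r\,U_{L^\infty}$, yet by the pointwise identity above it cannot lie in $\psi C_\p(U_{L^\infty})$; hence no $r>0$ satisfies $\psi C_\p(U_{L^\infty})\supset r\,U_{L^\infty}$, giving $k(\psi C_\p)=0$. The computation is entirely elementary, so there is no substantive obstacle; the only point requiring care is the selection of a test function (the characteristic function at $v_0$) that certifies the failure of surjectivity, and one should note that point evaluation at $v_0$ is a bounded functional on $L^\infty$, which is immediate from $|g(v_0)|\leq\|g\|_\infty$.
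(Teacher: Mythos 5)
Your proof is correct and, at its core, identical to the paper's: the paper also observes that $\psi C_\p f(v_0)=\psi(v_0)f(\p(v_0))=0$ and derives a contradiction from $\psi C_\p f = r\,\chi_{v_0}$ for $0<r<k(\psi C_\p)$, which is exactly your second, definitional argument. Your first route (non-surjectivity plus the cited equivalence $k(T)>0 \iff T$ surjective) is only a cosmetic repackaging of the same idea, so there is nothing substantively different to compare.
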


\begin{proof} 
Suppose that $k( \psi C_\p ) >0$. 
For $0<r < k( \psi C_\p )$, there exists $f \in L^\infty$ such that $\| f \|_\infty \leq 1$ and $\psi C_\p f = r \cdot \chi_{v_0}$. 
Then we have 
\begin{equation*}
r = r \cdot \chi_{v_0} (v_0) = \psi (v_0) \cdot f (\p (v_0)) =0, 
\end{equation*} 
which is a contradiction. 
\end{proof}

We determine the surjectivity modulus of $\psi C_\p$. 
\begin{theorem} \label{sur-mod}
Let $\psi$ be a bounded function on $T$ and $\p$ be a self-map of $T$. 
\begin{enumerate}
\item If $\p$ is not injective, then $k( \psi C_\p) =0$. 
\item If $\p$ is injective, then 
\begin{equation*} 
k( \psi C_\p) = \inf_{v \in T} | \psi (v)| . 
\end{equation*}
\end{enumerate}
\end{theorem}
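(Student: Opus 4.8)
The plan is to prove Theorem~\ref{sur-mod} by analyzing how injectivity of $\p$ controls the surjectivity modulus $k(\psi C_\p)$, treating the two cases separately.

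\textbf{Case (i): $\p$ is not injective.} First I would exploit the failure of injectivity to produce a target function that cannot be hit by any $\psi C_\p f$ with controlled norm. If $\p$ is not injective, there exist distinct vertices $v_1, v_2 \in T$ with $\p(v_1) = \p(v_2) =: w$. The key observation is that for \emph{any} $f \in L^\infty$, the values $\psi C_\p f(v_1) = \psi(v_1) f(w)$ and $\psi C_\p f(v_2) = \psi(v_2) f(w)$ are forced to be scalar multiples of the single number $f(w)$, so their ratio is rigidly determined (when the relevant weights are nonzero). To show $k(\psi C_\p) = 0$, I would assume $k(\psi C_\p) > 0$ for contradiction and pick a target $g$ that violates this rigidity. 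For each $r$ with $0 < r < k(\psi C_\p)$ there is $f$ with $\|f\|_\infty \le 1$ and $\psi C_\p f = r g$; choosing $g$ so that $g(v_1)$ and $g(v_2)$ are incompatible with the forced proportionality (for instance $g = \chi_{v_1}$, forcing $\psi(v_1) f(w) = r$ but $\psi(v_2) f(w) = 0$) yields a contradiction. A small subtlety is the case $\psi(v_2) = 0$, which is already covered by Proposition~\ref{prop-sur}, so I may assume all weights are nonzero, or simply invoke that proposition to dispose of the zero-weight situation at the outset.

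\textbf{Case (ii): $\p$ is injective.} Here I would prove the two inequalities $k(\psi C_\p) \ge \inf_{v} |\psi(v)|$ and $k(\psi C_\p) \le \inf_{v} |\psi(v)|$ separately. For the lower bound, let $m = \inf_{v \in T} |\psi(v)|$; given any $g \in L^\infty$ with $\|g\|_\infty \le 1$, I want to solve $\psi C_\p f = m g$, i.e. $\psi(v) f(\p(v)) = m\, g(v)$. Because $\p$ is injective, each vertex $u \in \Ran \p$ has a unique preimage $v = \p^{-1}(u)$, so I can define $f(u) = m\, g(\p^{-1}(u))/\psi(\p^{-1}(u))$ on the range of $\p$ and set $f$ to be $0$ (or anything bounded) off the range. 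Then $|f(u)| = m |g(\p^{-1}(u))|/|\psi(\p^{-1}(u))| \le m/m = 1$, so $\|f\|_\infty \le 1$ and $\psi C_\p f = m g$, which shows $m\, U_{L^\infty} \subset \psi C_\p(U_{L^\infty})$ and hence $k(\psi C_\p) \ge m$. For the upper bound, I would test against the characteristic functions: for each $w \in T$, solving $\psi C_\p f = r \chi_w$ with $\|f\|_\infty \le 1$ forces $|\psi(w) f(\p(w))| = r$, hence $r \le |\psi(w)|$; taking the infimum over $w$ gives $k(\psi C_\p) \le \inf_w |\psi(w)|$.

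The main obstacle I anticipate is the bookkeeping around zeros of $\psi$ and the behavior of $f$ off the range of $\p$ in Case (ii). If $\inf_v |\psi(v)| = 0$ the lower-bound construction degenerates, but then the claimed value is $0$ and Proposition~\ref{prop-sur} (together with a limiting argument using vertices where $|\psi|$ is small) should force $k(\psi C_\p) = 0$ directly; I would want to confirm that the infimum being approached rather than attained does not cause trouble. Once injectivity is used to make the equation $\psi(v) f(\p(v)) = m\, g(v)$ solvable vertex-by-vertex with a uniform bound, the rest is routine verification of norms and inclusions.
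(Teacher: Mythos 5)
Your proposal is correct and follows the paper's proof essentially step for step: the non-injective case by testing against $\chi_{v_1}$ and invoking Proposition~\ref{prop-sur} when a weight vanishes, the upper bound by solving $\psi C_\p f = r\chi_w$ for each $w$, and the lower bound by the explicit vertex-by-vertex inverse $f = (g \circ \p^{-1})/(\psi \circ \p^{-1})$ on $\p(T)$ and $0$ off the range. One small remark: your anticipated difficulty when $\inf_{v}|\psi(v)| = 0$ is moot, since your own upper bound $k(\psi C_\p) \leq \inf_{v}|\psi(v)|$ already forces $k(\psi C_\p) = 0$ in that case, with no need for Proposition~\ref{prop-sur} (which would not even apply if $\psi$ has no actual zeros) or any limiting argument.
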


\begin{proof} 
(i) Let $\p$ be a self-map of $T$, which is not injective. 
There exist two distinct vertices $v_1 , v_2 \in T$ such that $\p (v_1) = \p (v_2)$. 
Suppose that $k( \psi C_\p) >0$ and let $0<r< k( \psi C_\p)$. 
Then there exists $f \in L^\infty$ such that $\| f \|_\infty \leq 1$ and $\psi C_\p f = r \cdot \chi_{v_1}$. 
Since 
\begin{equation*}
r = r \cdot \chi_{v_1} (v_1) = \psi (v_1) \cdot f_1 (\p (v_1)) , 
\end{equation*} 
we have $\psi (v_1) \not= 0$ and $f_1 ( \p (v_1)) \not= 0$. 
On the other hand, we have 
\begin{equation*} \label{est-sur}
0 = r \cdot \chi_{v_1} (v_2) = p(v_2) \cdot f_1 (\p (v_2)) = p(v_2) \cdot f_1 (\p (v_1)) . 
\end{equation*}
Thus we get $p (v_2) =0$. 
By Proposition \ref{prop-sur}, we get $k( \psi C_\p) = 0$. 
This is a contradiction. 

(ii) Suppose $\p$ is injective. 
To prove $\displaystyle k( \psi C_\p) \leq \inf_{v \in T} | \psi (v)|$, we may assume $k( \psi C_\p) > 0$. 
Let $\varepsilon$ be a positive number less than $k( \psi C_\p)$. 
For any $w \in T$, there exists $f \in L^\infty$ such that $\| f \|_\infty \leq 1$ and $\psi C_\p f = (k( \psi C_\p) - \varepsilon) \cdot \chi_w$. 
It follows that 
\begin{equation*} 
k( \psi C_\p) - \varepsilon = (k( \psi C_\p) - \varepsilon) \cdot \chi_w (w) = \psi (w) \cdot f( \p (w)) \leq | \psi (w)| . 
\end{equation*} 
Since $w$ is arbitrary, we get $k( \psi C_\p) - \varepsilon \leq \inf_{v \in T} | \psi (v)|$. 
Letting $\varepsilon \to 0$, we get $ k( \psi C_\p) \leq \inf_{v \in T} | \psi (v)|$. 

Next we will prove the converse. 
We may assume $M = \inf_{v \in T} | \psi (v)| >0$. 
It is enough to prove that, for any $g \in L^\infty$ with $\| g \|_\infty \leq M$, there exists a function $f \in L^\infty$ such that $\| f \|_\infty \leq 1$ and $\psi C_\p f = g$. 
Remark that $\p$ is injective and $\psi (v) \not= 0$ for any $v \in T$. 
We here define that 
\begin{equation*}
f(w) = \left\{ 
\begin{matrix} 
\displaystyle \frac{g( \p^{-1} (w))}{\psi ( \p^{-1} (w))} & \ ( w \in \p(T) ) \vspace{2mm} \\ 
0 & \ ( w \not\in \p(T) ) 
\end{matrix} \right. 
\end{equation*}
Then we can see $\psi C_\p f (v) = g(v)$ and 
\begin{eqnarray*}
\| f \|_\infty 
& = & \sup_{w \in \p (T)} \frac{|g( \p^{-1} (w))|}{|\psi ( \p^{-1} (w))|} 
\ \leq \ \frac{\| g \|_\infty}{\displaystyle \inf_{v \in T} | \psi (v)|} \\ 
& \leq & \frac{M}{M} 
\ = \ 1 
\end{eqnarray*}
Thus we conclude $ k( \psi C_\p) = \inf_{v \in T} | \psi (v)|$. 
\end{proof}

\begin{corollary} \label{cor-minimum-Linfty}
Let $\psi$ be a bounded function on $T$ and $\p$ be a self-map of $T$. 
\begin{enumerate}
\item $\displaystyle j(M_\psi) = k(M_\psi) = \inf_{v \in T} | \psi (v)|$. \vspace{2mm} 
\item $j(C_\p) = \left\{ 
\begin{matrix} 
0 & \ ( \mbox{if $\p$ is not surjective} ) \\ 
1 & \ ( \mbox{if $\p$ is surjective} ) \hspace{7mm} 
\end{matrix} \right.$. \vspace{2mm} 
\item $k(C_\p) = \left\{ 
\begin{matrix} 
0 & \ ( \mbox{if $\p$ is not injective} ) \\ 
1 & \ ( \mbox{if $\p$ is injective} ) \hspace{7mm} 
\end{matrix} \right.$. 
\end{enumerate}
\end{corollary}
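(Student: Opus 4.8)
The plan is to obtain all three identities by specializing the two minimum-modulus theorems just proved, Theorem~\ref{inj-mod} and Theorem~\ref{sur-mod}, to the two degenerate weighted composition operators $M_\psi = \psi C_{\id}$ and $C_\p = 1 \cdot C_\p$, where $\id$ is the identity self-map of $T$ and $1$ denotes the constant weight. In both cases the standing hypothesis that the weight be bounded is guaranteed by the assumption of the corollary (and trivially for the constant weight), so the theorems apply directly.

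For part (i), I would note that the identity map is a bijection of $T$, hence it is both surjective and injective, so that both Theorem~\ref{inj-mod}(ii) and Theorem~\ref{sur-mod}(ii) apply to $M_\psi$. Since $\id^{-1}(w) = \{ w \}$ for every $w \in T$, the inner supremum in \eqref{est-inj} collapses to $|\psi(w)|$, so Theorem~\ref{inj-mod}(ii) gives $j(M_\psi) = \inf_{w \in T} |\psi(w)|$, while Theorem~\ref{sur-mod}(ii) directly gives $k(M_\psi) = \inf_{v \in T} |\psi(v)|$. This yields the common value asserted in (i).

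For parts (ii) and (iii), I would take the weight identically equal to $1$. If $\p$ is not surjective, Theorem~\ref{inj-mod}(i) gives $j(C_\p) = 0$; if $\p$ is surjective, then each fiber $\p^{-1}(w)$ is nonempty, so $\sup_{v \in \p^{-1}(w)} |\psi(v)| = 1$, and Theorem~\ref{inj-mod}(ii) gives $j(C_\p) = \inf_{w \in T} 1 = 1$, proving (ii). Symmetrically, Theorem~\ref{sur-mod}(i) gives $k(C_\p) = 0$ when $\p$ is not injective, while Theorem~\ref{sur-mod}(ii) gives $k(C_\p) = \inf_{v \in T} 1 = 1$ when $\p$ is injective, proving (iii).

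There is no genuine obstacle here: the only point requiring care is to verify that the hypotheses of the relevant cases of the two theorems are met under each specialization---in particular that the identity self-map is simultaneously surjective and injective, and that surjectivity of $\p$ is precisely what guarantees the fibers $\p^{-1}(w)$ are nonempty, so that the supremum of the constant weight $1$ over each fiber equals $1$.
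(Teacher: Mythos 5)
Your proposal is correct and is exactly the argument the paper intends: the corollary is stated without proof as the immediate specialization of Theorem~\ref{inj-mod} and Theorem~\ref{sur-mod} to $M_\psi = \psi C_{\id}$ and $C_\p = 1\cdot C_\p$, which is precisely what you carry out. Your explicit checks---that $\id$ is bijective so both theorems apply, and that surjectivity of $\p$ makes each fiber nonempty so the supremum of the constant weight over it equals $1$---are the right points of care and match the paper's reasoning.
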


\section{Results on $\psi C_\p : \L \to L^\infty$}

In this section we study the weighted composition operator $\psi C_\p$ acting from $\L$ to $L^\infty$. 

\subsection{Boundedness and compactness} 
\ 

\begin{theorem} \label{bdd1}
Let $\psi$ be a function on $T$ and $\p$ be a self-map of $T$. 
Then the following are equivalent: 
\begin{enumerate}
\item $\psi C_\p : \L \to L^\infty$ is bounded. 
\item $\psi C_\p : \L_o \to L^\infty$ is bounded. 
\item $\psi \in L^\infty$ and $\ds \sup_{v \in T} | \psi (v)| |\p (v)|< \infty$. 
\end{enumerate}
Furthermore, under the above conditions, the following estimate holds: 
\begin{equation*}
\max \{ \| \psi \|_\infty , \| \psi (v) \cdot |\p (v)| \|_\infty \} 
\leq \| \psi C_\p \|_{\L \to L^\infty} 
\leq \| \psi (v) (1+ | \p (v)|) \|_\infty . 
\end{equation*}
\end{theorem}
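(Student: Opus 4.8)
The plan is to establish the cycle (i) $\Rightarrow$ (ii) $\Rightarrow$ (iii) $\Rightarrow$ (i) and to read off the two-sided norm estimate from the same computations. The implication (i) $\Rightarrow$ (ii) is immediate: since $\L_o \subset \L$ and the two norms agree on $\L_o$, the restriction to $\L_o$ of an operator bounded on $\L$ is bounded, and $\| \psi C_\p \|_{\L_o \to L^\infty} \leq \| \psi C_\p \|_{\L \to L^\infty}$.

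For (ii) $\Rightarrow$ (iii) I would test against suitable functions in $\L_o$. First, the constant function $1$ lies in $\L_o$ with $\| 1 \|_\L = 1$, and $\psi C_\p 1 = \psi$; hence $\| \psi \|_\infty \leq \| \psi C_\p \|_{\L_o \to L^\infty}$, so in particular $\psi \in L^\infty$. The key step is to recover the factor $| \p (v)|$. For each $N \in \N$, I would use the truncated distance function $f_N (w) = \min \{ |w|, N \}$. A direct computation of the discrete derivative gives $D f_N (w) = 1$ for $0 < |w| \leq N$ and $D f_N (w) = 0$ otherwise, so that $f_N \in \L_o$ with $\| f_N \|_\L = 1$. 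Since $f_N ( \p (v)) = | \p (v)|$ as soon as $N \geq | \p (v)|$, for each fixed $v$ we obtain $| \psi (v)| \, | \p (v)| = | \psi C_\p f_N (v)| \leq \| \psi C_\p \|_{\L_o \to L^\infty}$; taking the supremum over $v$ yields the finiteness asserted in (iii) together with $\| \psi (v) | \p (v)| \|_\infty \leq \| \psi C_\p \|_{\L_o \to L^\infty}$. I expect this construction of the $f_N$ to be the main point, since the natural candidate $w \mapsto |w|$ does not belong to $\L_o$; the truncation is exactly what restores membership while still capturing the growth of $| \p (v)|$ for $N$ large.

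Finally, for (iii) $\Rightarrow$ (i) and the upper estimate, I would invoke the growth condition \eqref{growth} directly. For $f \in \L$ and any $v \in T$,
\begin{equation*}
| \psi C_\p f (v)| = | \psi (v)| \, |f( \p (v))| \leq | \psi (v)| \bigl( |f(o)| + | \p (v)| \, \| Df \|_\infty \bigr) \leq | \psi (v)| (1 + | \p (v)|) \, \| f \|_\L ,
\end{equation*}
where the last inequality only discards nonnegative terms. Taking the supremum over $v$ gives $\| \psi C_\p f \|_\infty \leq \| \psi (v)(1 + | \p (v)|) \|_\infty \, \| f \|_\L$, which is finite under (iii) because $\| \psi (v)(1 + | \p (v)|) \|_\infty \leq \| \psi \|_\infty + \| \psi (v) | \p (v)| \|_\infty$. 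This closes the cycle and supplies the claimed upper bound. Combining it with the two lower bounds from the previous paragraph (whose test functions lie in $\L_o \subset \L$, so the inequalities hold a fortiori for $\| \psi C_\p \|_{\L \to L^\infty}$) yields the displayed two-sided estimate.
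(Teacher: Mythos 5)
Your proposal is correct and follows essentially the same route as the paper: the growth condition for the upper bound, the constant function $1$ for $\| \psi \|_\infty$, and the truncated distance functions $f_N(w)=\min\{|w|,N\}$ (the paper's $F_N$) for the lower bound $\| \psi(v)|\p(v)| \|_\infty \leq \| \psi C_\p \|$. The only cosmetic difference is that you verify $\| f_N \|_\L = 1$ and the role of $\L_o$-membership a bit more explicitly than the paper does.
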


\begin{proof}
The implication (i) $\Rightarrow$ (ii) is trivial. 
Suppose the condition (iii) holds. 
Let $f$ be a function in $\L$ with $\| f \|_\L \leq 1$. 
Then the growth condition \eqref{growth} implies that 
\begin{eqnarray*}
| \psi C_\p f (v) | 
& = & | \psi (v) f( \p (v))| 
\ \leq \ | \psi (v)| \big( |f(o)| + |\p (v)| \cdot \| Df \|_\infty \big) \\ 
& \leq & | \psi (v)| (1+ | \p (v)|) \| f \|_\L \\ 
& \leq & \big\{ \| \psi \|_\infty + \sup_{v \in T} | \psi (v) \cdot \p (v)| \big\} \| f \|_\L < \infty . 
\end{eqnarray*}
Hence we get (i) and the upper bound of $\| \psi C_\p \|_{\L \to L^\infty}$. 

Next, suppose the condition (ii) holds. 
Then we get $\psi C_\p 1 = \psi \in L^\infty$. 
For any positive integer $N$, we let 
\begin{equation} \label{FN}
F_N (v) = \left\{ 
\begin{matrix} 
|v| & ( |v| \leq N) \\ 
N & ( |v| > N) 
\end{matrix} \right. . 
\end{equation}
Clearly $F_N \in \L_o$ and $\| F_N \|_\L =1$. 
For any vertex $w \in T$, take enough large $N$ so that $N > |\p (w)|$. 
Then we have that 
\begin{equation*}
\| \psi C_\p \|_{\L_o \to L^\infty} \ \geq \ \| \psi C_\p F_N \|_\infty 
\ \geq \ | \psi (w) F_N (\p (w)) | 
\ = \ | \psi (w)| |\p (w)| . 
\end{equation*}
Taking the supremum on $w$ over $T$, we conclude the condition (iii) and the lower bound of $\| \psi C_\p \|_{\L \to L^\infty}$. 
\end{proof}

\begin{theorem} \label{compact1}
Let $\psi$ be a function on $T$ and $\p$ be a self-map of $T$. 
Suppose that $\psi C_\p : \L \to L^\infty$ is bounded. 
Then the following are equivalent: 
\begin{enumerate}
\item $\psi C_\p : \L \to L^\infty$ is compact. 
\item $\psi C_\p : \L_o \to L^\infty$ is compact. 
\item $\displaystyle | \psi (v)| \cdot |\p (v)| \to 0$ if $|\p (v)| \to \infty$. 
\end{enumerate}
\end{theorem}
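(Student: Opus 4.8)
The plan is to prove the cycle of implications (i) $\Rightarrow$ (ii) $\Rightarrow$ (iii) $\Rightarrow$ (i). The implication (i) $\Rightarrow$ (ii) is immediate, since $\L_o$ is a subspace of $\L$ and the restriction of a compact operator to a subspace is compact. Throughout, the main device will be the weak convergence lemma (Lemma \ref{WCL}): for both $\L$ and $\L_o$, compactness of $\psi C_\p$ into $L^\infty$ is equivalent to $\| \psi C_\p f_n \|_\infty \to 0$ for every sequence $\{ f_n \}$ that is bounded in the respective space and converges to $0$ pointwise. Note also that boundedness of $\psi C_\p$ forces $\psi \in L^\infty$ by Theorem \ref{bdd1}.

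For (iii) $\Rightarrow$ (i), I would take a sequence $\{ f_n \}$ bounded in $\L$, say $\| f_n \|_\L \leq M$, with $f_n \to 0$ pointwise, and estimate $\| \psi C_\p f_n \|_\infty = \sup_v |\psi (v)| \, |f_n (\p (v))|$ by splitting the supremum according to the size of $|\p (v)|$. Fix $\e > 0$. By (iii) choose $R \geq 1$ so that $|\psi (v)| \, |\p (v)| < \e$ whenever $|\p (v)| > R$; on this part the growth estimate \eqref{growth} gives $|f_n (\p (v))| \leq M(1 + |\p (v)|) \leq 2M |\p (v)|$, so the contribution is at most $2M \e$, uniformly in $n$. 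On the complementary part $|\p (v)| \leq R$, the values $\p (v)$ lie in the finite ball $\{ w : |w| \leq R \}$ (the tree is locally finite), so $|\psi (v)| \, |f_n (\p (v))| \leq \| \psi \|_\infty \max_{|w| \leq R} |f_n (w)|$, which tends to $0$ by pointwise convergence on a finite set. Letting $n \to \infty$ and then $\e \to 0$ yields $\| \psi C_\p f_n \|_\infty \to 0$, so $\psi C_\p$ is compact.

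For (ii) $\Rightarrow$ (iii) I would argue by contraposition: assuming (iii) fails, I produce a sequence in $\L_o$, bounded and converging to $0$ pointwise, whose images do not tend to $0$, so that $\psi C_\p : \L_o \to L^\infty$ is not compact. Failure of (iii) gives $\delta > 0$ and vertices $v_n$ with $m_n := |\p (v_n)| \to \infty$ and $|\psi (v_n)| \, m_n \geq \delta$. Writing $w_n = \p (v_n)$ and $R_n = \lceil m_n / 2 \rceil$, set $f_n (v) = \max \{ 0 , R_n - d_T (v, w_n) \}$. Since $t \mapsto \max \{ 0, R_n - t \}$ is $1$-Lipschitz and $d_T (\cdot , w_n)$ changes by exactly $1$ between a vertex and its parent, we have $\| Df_n \|_\infty \leq 1$; moreover $f_n (o) = 0$ because $R_n < m_n$, so $\| f_n \|_\L \leq 1$. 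As $Df_n$ has finite support, $f_n \in \L_o$. For fixed $v$, once $|v| \leq m_n - R_n$ (which holds for all large $n$, as $m_n - R_n \to \infty$) one has $d_T (v, w_n) \geq m_n - |v| \geq R_n$ and hence $f_n (v) = 0$, giving pointwise convergence to $0$. Finally $\| \psi C_\p f_n \|_\infty \geq |\psi (v_n)| \, f_n (w_n) = |\psi (v_n)| \, R_n \geq |\psi (v_n)| \, m_n / 2 \geq \delta / 2$, so the images stay bounded away from $0$.

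I expect the construction in (ii) $\Rightarrow$ (iii) to be the crux. The delicate point is the choice of truncation radius $R_n$: it must grow proportionally to $m_n$ so that $|\psi (v_n)| \, R_n$ stays bounded below, yet the ``support floor'' $m_n - R_n$ must also tend to infinity so that each fixed vertex eventually lies outside the support and pointwise convergence survives. Taking $R_n \approx m_n / 2$ balances these two competing requirements; the remaining verifications ($\| f_n \|_\L \leq 1$ and membership in $\L_o$) are routine once the cut-off is in place.
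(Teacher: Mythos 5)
Your proposal is correct, but it takes a genuinely different route from the paper's. The paper does not prove Theorem \ref{compact1} directly at all: it deduces it from the essential norm formula of Theorem \ref{ess-norm-wc}, whose upper bound is obtained by subtracting the compact operators $K_n$ (replacing $f(v)$, for $|v|>n$, by the value of $f$ at the level-$n$ ancestor of $v$, and then using the Lipschitz estimate $|f(\p(v))-f(w_n)| \le (|\p(v)|-n)\|f\|_\L$), and whose lower bound uses the root-radial ramp functions $g_{n,r}$ together with a limit $r \to 0$ to recover the sharp constant. You instead argue the three-way equivalence directly through Lemma \ref{WCL}: for (iii) $\Rightarrow$ (i) you split the supremum at a radius $R$, controlling the far part by the growth condition \eqref{growth} and the near part by pointwise convergence on a finite ball (local finiteness), which replaces the $K_n$ device; for (ii) $\Rightarrow$ (iii) you use tent functions $f_n(v)=\max\{0,\,R_n-d_T(v,\p(v_n))\}$ centered at $\p(v_n)$ with $R_n=\lceil m_n/2\rceil$, rather than the paper's radial $g_{n,r}$. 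Your verifications all check out: $\|Df_n\|_\infty\le 1$ because the distance to a fixed vertex changes by exactly $1$ along each edge of a tree, $f_n(o)=0$ since $R_n\le m_n$, $Df_n$ has finite support so $f_n\in\L_o$, and $f_n\to 0$ pointwise because $m_n-R_n\to\infty$; your balancing of these last two requirements via $R_n\approx m_n/2$ is indeed the crux, as you say. The trade-off between the two approaches: yours is more elementary and self-contained (no essential-norm machinery, no $r\to 0$ limit), while the paper's yields strictly more, namely the exact value of the essential norm, of which Theorem \ref{compact1} is the qualitative shadow; your tent functions lose a factor of $2$ and as written would only give the essential norm up to a multiplicative constant. (Incidentally, choosing $R_n=m_n-\lceil\sqrt{m_n}\rceil$ instead of $\lceil m_n/2\rceil$ keeps $m_n-R_n\to\infty$ and would recover the sharp lower bound as well, giving an arguably simpler proof of that half of Theorem \ref{ess-norm-wc}.)
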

Since the above theorem follows from Theorem \ref{ess-norm-wc}, here we do not give its proof. 
We here present the example satisfying the conditions of Theorem \ref{bdd1} and Theorem \ref{compact1}. 
\begin{example}
Let $\p$ be a self-map of $T$ with infinite range. 
Then $C_\p$ is not bounded from $\L$ to $L^\infty$. 
Put 
\begin{equation*}
\psi (v) = \frac{1}{|\p (v)|+1} 
\end{equation*}
Then, by Theorem \ref{bdd1} and Theorem \ref{compact1}, we have that $\psi C_\p$ is bounded but is not compact. 
On the other hand, then $\psi^2 C_\p$ is compact. 
\end{example}

\begin{theorem} \label{ess-norm-wc}
Let $\psi$ be a function on $T$ and $\p$ be a self-map of $T$. 
Suppose that $\psi C_\p$ is a bounded operator from $\L$ (respectively, $\L_o$) to $L^\infty$. 
Then the following holds: 
\begin{equation*}
\| \psi C_\p \|_{e, \L \to L^\infty} 
= \| \psi C_\p \|_{e, \L_o \to L^\infty} 
= \limsup_{|\p (v)| \to \infty} \, | \psi (v)| \cdot |\p (v)| . 
\end{equation*}
\end{theorem}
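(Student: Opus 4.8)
The plan is to prove the essential norm formula by establishing two inequalities, following the same two-sided strategy used in the proof of Theorem~\ref{bdd2-1}(ii) but now incorporating the weight factor $|\p(v)|$ dictated by the growth condition~\eqref{growth}. Write $L = \limsup_{|\p(v)| \to \infty} |\psi(v)| \cdot |\p(v)|$. Since $\L_o \subset \L$ and the essential norm of a restriction cannot exceed the essential norm of the full operator, it suffices to prove $\| \psi C_\p \|_{e, \L \to L^\infty} \leq L$ and $\| \psi C_\p \|_{e, \L_o \to L^\infty} \geq L$; the chain of inequalities $\| \psi C_\p \|_{e, \L_o \to L^\infty} \leq \| \psi C_\p \|_{e, \L \to L^\infty} \leq L \leq \| \psi C_\p \|_{e, \L_o \to L^\infty}$ then forces all three quantities to coincide.

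For the upper bound on $\L$, I would reuse the cutoff operators $K_N$ from Theorem~\ref{bdd2-1}, now viewed as maps $\L \to \L$ (or composed appropriately), and note that $\psi C_\p K_N$ is compact since it has finite-dimensional range behavior on the relevant vertices. Then $\| \psi C_\p \|_{e, \L \to L^\infty} \leq \| \psi C_\p - \psi C_\p K_N \|$, and I would estimate this operator norm over $\| f \|_\L \leq 1$ by splitting at $|\p(v)| > N$. Applying the growth estimate $|f(\p(v))| \leq |f(o)| + |\p(v)| \cdot \| Df \|_\infty \leq (1 + |\p(v)|) \| f \|_\L$ on the tail, I expect to obtain a bound of the form $\sup_{|\p(v)| > N} |\psi(v)| (1 + |\p(v)|)$, which tends to $L$ as $N \to \infty$ once one checks that the additive $|\psi(v)|$ term is absorbed (this follows because boundedness forces $\psi \in L^\infty$ and the product $|\psi(v)||\p(v)|$ controls the relevant tail).

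For the lower bound on $\L_o$, I would test against an explicit bounded sequence converging to zero pointwise and invoke the weak convergence characterization of the essential norm: for any compact $K$ and any test functions $g_n \in \L_o$ with $\| g_n \|_\L \leq 1$ and $g_n \to 0$ pointwise, one has $\| \psi C_\p + K \| \geq \limsup_n \| (\psi C_\p + K) g_n \|_\infty \geq \limsup_n \| \psi C_\p g_n \|_\infty$ because $\| K g_n \|_\infty \to 0$ by compactness and Lemma~\ref{WCL}. The natural candidates are truncated ``distance'' functions modeled on $F_N$ from~\eqref{FN}, or more precisely functions peaked near a sequence $\{v_n\}$ with $|\p(v_n)| \to \infty$ realizing the limit superior $L$; evaluating $\psi C_\p g_n$ at $v_n$ should recover $|\psi(v_n)| \cdot |\p(v_n)|$, giving the lower bound $L$.

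The main obstacle I anticipate is the construction of the test functions in $\L_o$ for the lower bound: they must simultaneously lie in the little Lipschitz space (so $Dg_n(v) \to 0$ as $|v| \to \infty$), have uniformly bounded $\L$-norm, converge to zero pointwise, and yet attain the value $|\p(v_n)|$ (up to normalization) at the point $\p(v_n)$ so that $|\psi(v_n) g_n(\p(v_n))|$ approaches $L$. Balancing the pointwise-to-zero requirement against the need to have large value at $\p(v_n)$ requires shifting the ``plateau'' of the function outward as $n$ grows; I would handle this by choosing functions that vanish near the root, rise linearly along the geodesic toward $\p(v_n)$ up to height $|\p(v_n)|$, and are then cut off, taking care that the discrete derivative stays bounded by one and decays at infinity. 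Verifying membership in $\L_o$ and the pointwise convergence for this family is the delicate part; the rest of the argument is a routine application of the growth condition and Lemma~\ref{WCL}.
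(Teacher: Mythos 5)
Your proposal is correct in outline and follows essentially the same two-sided strategy as the paper: the sandwich $\| \psi C_\p \|_{e, \L_o \to L^\infty} \leq \| \psi C_\p \|_{e, \L \to L^\infty} \leq L \leq \| \psi C_\p \|_{e, \L_o \to L^\infty}$, compact cutoffs plus the growth condition \eqref{growth} for the upper bound, and pointwise-null test functions together with Lemma \ref{WCL} for the lower bound. The differences are in the two ingredients you leave open. For the upper bound, the paper does not reuse the zero-truncation $K_N$ of Theorem \ref{bdd2-1}; it uses ``freezing'' operators $K_n f(v) = f(v)$ for $|v| \leq n$ and $K_n f(v) = f(v_n)$ for $|v| > n$, where $v_n$ is the level-$n$ ancestor of $v$, so that $|f(\p(v)) - f(w_n)| \leq (|\p(v)| - n)\| f \|_\L$ and the tail bound $\sup_{|\p(v)|>n} |\psi(v)|\, |\p(v)|$ drops out with no additive term to absorb. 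Your variant also works: $K_N$ is finite rank on $\L$ (the ball $\{ |v| \leq N \}$ is finite since $T$ is locally finite), and your absorption step is legitimate since $|\psi(v)| \leq |\psi(v)|\,|\p(v)|/N$ on the tail; the two routes differ only cosmetically. For the lower bound, the paper resolves exactly the difficulty you single out by taking \emph{radial} test functions $g_{n,r}$: zero for $|v| < \sqrt{n}$, rising with the power profile $(|v|-\sqrt{n})^{r+1}$ to height $n$ on $\sqrt{n} \leq |v| < n$, constant $n$ beyond. These are eventually constant (hence in $\L_o$), tend to $0$ pointwise, satisfy $\| g_{n,r} \|_\L \to r+1$, and with $n = |\p(w)|$ give $\| \psi C_\p g_{n,r} \|_\infty \geq |\psi(w)|\,|\p(w)|$; letting $r \to 0$ yields $L$.

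One caution about your sketch of the test functions. If ``rise linearly along the geodesic toward $\p(v_n)$'' were implemented as a function supported on that single geodesic, it would fail: at a child $v$ just off the geodesic one has $Df(v) = -f(v^-)$, of size up to $|\p(v_n)|$, so such a function does not have uniformly bounded $\L$-norm; the profile must be radial (or constant on sectors beyond the cutoff), as in the paper. Also, your requirements that the function vanish near the root, attain height $|\p(v_n)|$ at level $|\p(v_n)|$, and have discrete derivative bounded by one are literally incompatible: the average slope must exceed $1$. Either allow the slope to exceed $1$ by $o(1)$ (this is what the paper's $\| g_{n,r} \|_\L \to r+1$, $r \to 0$ accomplishes), or keep slope $1$ and settle for height $|\p(v_n)| - \sqrt{|\p(v_n)|}$, which still yields $L$ because boundedness of $\psi C_\p$ gives $|\psi(v_n)|\,|\p(v_n)| \leq C$, so the error $|\psi(v_n)|\sqrt{|\p(v_n)|} \leq C/\sqrt{|\p(v_n)|} \to 0$. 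With either fix, your plan compiles into a complete proof equivalent to the paper's.
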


\begin{proof}
It is trivial that $\| \psi C_\p \|_{e, \L \to L^\infty} \geq \| \psi C_\p \|_{e, \L_o \to L^\infty}$. 

For each positive integer $n$, we define the operator $K_n : \L \to \L$ by 
\begin{equation*}
K_n f (v) = \left\{ 
\begin{matrix} 
f(v) & \ (|v| \leq n ) \\ 
f (v_n) & \ ( |v| >n ) 
\end{matrix} \right. . 
\end{equation*}
where $v_n$ is the unique vertex lying in the path between $o$ and $v$ such that $|v_n| =n$. 
By Lemma \ref{WCL}, we have that $K_n$ is compact. 
Therefore, we get 
\begin{eqnarray*}
\| \psi C_\p \|_{e, \L \to L^\infty} 
& \leq & \| \psi C_\p - \psi C_\p K_n \|_{\L \to L^\infty} \\ 
& = & \sup_{\| f \|_\L =1} \| ( \psi C_\p - \psi C_\p K_n) f \|_\infty 
\end{eqnarray*}
For $| \p (v)|>n$, put $w_n$ the unique vertex lying in the path between $o$ and $\p (v)$ such that $|w_n| =n$. 
Since $d_T ( \p (v) , w_n) = |\p (v)| - n$, we have that 
\begin{eqnarray*}
\| (\psi C_\p - \psi C_\p K_n) f \|_\infty 
& = & \sup_{v \in T}| \, | \psi C_\p f(v) - \psi C_\p K_n f (v) | \\ 
& = & \sup_{| \p (v)| >n} \, | \psi (v)| \cdot | f( \p (v)) - f (w_n) | \\ 
& \leq & \sup_{| \p (v)| >n} \, | \psi (v)| \cdot ( |\p (v)| - n ) \| f \|_\L \\ 
& \leq & \sup_{| \p (v)| >n} \, | \psi (v)| \cdot |\p (v)| \cdot \| f \|_\L . 
\end{eqnarray*}
Thus we have that 
\begin{equation*}
\| \psi C_\p \|_{e, \L \to L^\infty} 
\leq \sup_{| \p (v)| >n} \, | \psi (v)| \cdot |\p (v)| . 
\end{equation*}
Letting $n \to \infty$, we get 
\begin{equation*}
\| \psi C_\p \|_{e, \L \to L^\infty} 
\leq \limsup_{| \p (v)| \to \infty} \, | \psi (v)| \cdot |\p (v)| . 
\end{equation*}

Next we will prove the converse. 
To do this, let $n$ be a positive integer and $r \in (0, 1)$. 
We define that 
\begin{equation*}
g_{n, r} (v) = \left\{ 
\begin{matrix} 
0 & \ (|v| < \sqrt{n} ) \vspace{2mm} \ \ \ \\ 
\frac{n}{n- \sqrt{n}} \cdot \frac{(|v| - \sqrt{n} )^{r+1}}{(n - \sqrt{n} )^r} & \ \ ( \sqrt{n} \leq |v| < n ) \vspace{2mm} \\ 
n & \ ( |v| \geq n ) \ \ \ \ 
\end{matrix} \right. . 
\end{equation*}
Then $g_{n, r}$ is in $\L_o$ and $g_{n, r} (v) \to 0$ pointwise as $n \to \infty$. 
By short calculation, we have 
\begin{equation*}
\| g_{n, r} \|_\L = \sup_{v \in T} |D g_{n, r} (v) | = \frac{n}{n- \sqrt{n}} \cdot \frac{(n - \sqrt{n} )^{r+1} - (n - \sqrt{n} -1)^{r+1}}{(n - \sqrt{n} )^r} . 
\end{equation*}
Since $\{ x^{r+1} - (x-1)^{r+1} \} / x^r \to r+1$ as $x \to \infty$, we have that $\| g_{n, r} \|_\L \to r+1$ as $n \to \infty$. 
By Lemma \ref{WCL}, we have $\| K g_{n, r} \|_\infty \to 0$ as $n \to \infty$ for any compact operator $K$. 
Fix a vertex $w \in T$ and put $n = |\p (w)|$. 
Since $\| \psi C_\p g_{n, r} \|_\infty \geq | \psi (w) \cdot g_{n, r} ( \p (w))| = | \psi (w)| \cdot |\p (w)|$, we obtain that 
\begin{eqnarray*}
\| \psi C_\p - K \|_{\L_o \to L^\infty} 
& \geq & \limsup_{n \to \infty} \frac{\| ( \psi C_\p - K) g_{n, r} \|_\infty}{\| g_{n, r} \|_\L} \\ 
& \geq & \limsup_{n \to \infty} \frac{\| \psi C_\p g_{n, r} \|_\infty - \| K g_{n, r} \|_\infty}{\| g_{n, r} \|_\L} \\ 
& \geq & \frac{1}{r+1} \, \limsup_{|\p (w)| \to \infty} | \psi (w)| \cdot |\p (w)| . 
\end{eqnarray*}
Now, letting $r \to 0$, we get 
\begin{equation*}
\| \psi C_\p \|_{e, \L_o \to L^\infty} 
\geq \limsup_{| \p (v)| \to \infty} \, | \psi (v)| \cdot |\p (v)| . 
\end{equation*}
\end{proof}

If we take $\p(v) =v$ in Theorem \ref{bdd1} and Theorem \ref{compact1}, then we get the results on the multiplication operators $M_\psi$, which are same as Theorem B. 
For the composition operators, by Theorem \ref{bdd1}, Theorem \ref{compact1} for $\psi (v) \equiv 1$, and (ii) of Corollary \ref{cor-compact-Linfty}, and Theorem 4.2 of \cite{ACE}, we get the following result on $C_\p$. 
\begin{corollary} \label{cor-equiv}
Let $\p$ be a self-map of $T$. 
Then the following are equivalent: 
\begin{enumerate}
\item $C_\p : \L \to L^\infty$ is bounded. 
\item $C_\p : \L_o \to L^\infty$ is bounded. 
\item $C_\p : L^\infty \to L^\infty$ is compact. 
\item $C_\p : \L \to L^\infty$ is compact. 
\item $C_\p : \L_o \to L^\infty$ is compact. 
\item $C_\p : \L \to \L$ is compact. 
\item $\p$ has finite range. 
\end{enumerate}
\end{corollary}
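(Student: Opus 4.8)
The strategy is to reduce every one of the seven assertions to condition (vii) by specializing the results already proved to the constant weight $\psi \equiv 1$, the key structural fact being that $T$ is locally finite. I would take (vii) as the pivot and show each of (i)--(vi) is equivalent to it.

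First I would handle boundedness. For $\psi \equiv 1$, condition (iii) of Theorem \ref{bdd1} reads $\sup_{v \in T} |\p(v)| < \infty$; since $T$ is locally finite, a self-map has finite range if and only if $\sup_{v \in T} |\p(v)| < \infty$, so this is exactly (vii). Thus Theorem \ref{bdd1} yields at once the equivalence of (i), (ii), and (vii). Next, the equivalence of (iii) and (vii) is nothing but part (ii) of Corollary \ref{cor-compact-Linfty}, which already characterizes compactness of $C_\p$ on $L^\infty$ by the finite-range property.

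For the compactness statements from the Lipschitz spaces I would argue in two directions. If (vii) holds, then by the boundedness part $C_\p$ is bounded from $\L$, and a fortiori from $\L_o$, to $L^\infty$, so Theorem \ref{compact1} is applicable; with $\psi \equiv 1$ its defining condition (iii) becomes ``$|\p(v)| \to 0$ whenever $|\p(v)| \to \infty$,'' which is vacuously true once $\p$ has finite range, because then there is no sequence along which $|\p(v)| \to \infty$. Hence (iv) and (v) follow. Conversely, compactness of $C_\p$ from $\L$ (or from $\L_o$) to $L^\infty$ forces boundedness, whence Theorem \ref{bdd1} returns (vii). This establishes the chain (iv) $\Leftrightarrow$ (v) $\Leftrightarrow$ (vii). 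Alternatively, the same conclusion can be read directly off Theorem \ref{ess-norm-wc}, whose right-hand side for $\psi \equiv 1$ is $\limsup_{|\p(v)| \to \infty} |\p(v)|$, and this quantity vanishes precisely when $\p$ has finite range.

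Finally, the remaining equivalence (vi) $\Leftrightarrow$ (vii) is not obtained by specialization but is imported from Theorem 4.2 of \cite{ACE}, which characterizes compactness of $C_\p$ on $\L$ by the finite-range condition. Assembling these pieces shows that each of (i)--(vi) is equivalent to (vii), and the corollary follows. The only genuine subtlety to watch is the standing boundedness hypothesis in Theorem \ref{compact1}: one must first secure boundedness---either from (vii) via Theorem \ref{bdd1} or directly from the assumed compactness---before invoking the compactness criterion, and one must recognize that finite range renders condition (iii) of Theorem \ref{compact1} vacuously valid rather than a limit that needs to be verified.
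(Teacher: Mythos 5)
Your proposal is correct and follows exactly the route the paper intends: the paper gives no separate proof, stating only that the corollary follows from Theorem \ref{bdd1} and Theorem \ref{compact1} specialized to $\psi \equiv 1$, part (ii) of Corollary \ref{cor-compact-Linfty}, and Theorem 4.2 of \cite{ACE}, which is precisely your assembly. Your added care about the local-finiteness identification of finite range with $\sup_{v \in T} |\p(v)| < \infty$ and about securing boundedness before invoking Theorem \ref{compact1} simply makes explicit what the paper leaves implicit.
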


\subsection{Isometries} \

\begin{theorem} \label{iso}
For any function $\psi$ on $T$ and any self-map $\p$ of $T$, $\psi C_\p$ acting from $\L$ (or $\L_o$) to $L^\infty$ is not an isometry. 
\end{theorem}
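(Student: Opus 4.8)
The plan is to argue by contradiction, exploiting the tension between the isometry condition and the boundedness condition furnished by Theorem~\ref{bdd1}. Suppose $\psi C_\p$ were an isometry from $\L$ (or $\L_o$) to $L^\infty$. Being an isometry it is in particular bounded, so Theorem~\ref{bdd1} supplies a finite constant $M = \sup_{v \in T} |\psi(v)|\,|\p(v)| < \infty$. The strategy is then to produce a sequence of unit-norm test functions whose images force $|\psi|$ to stay bounded away from $0$ on vertices $w$ with $|\p(w)|$ arbitrarily large, contradicting the finiteness of $M$.

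The natural test functions are the sector characteristic functions $\eta_v$. First I would record that for $v \neq o$ one has $\eta_v(o) = 0$, and that $D\eta_v$ is the characteristic function of the single vertex $v$: indeed, for $w \neq v$ the vertices $w$ and $w^-$ either both lie in $S_v$ or both lie outside it, while at $w = v$ we have $\eta_v(v)=1$ and $\eta_v(v^-)=0$. Hence $\|\eta_v\|_\L = 1$, and since $D\eta_v$ is eventually $0$ we in fact have $\eta_v \in \L_o$; this is what lets a single argument cover both $\L$ and $\L_o$. Next I compute
\[
\|\psi C_\p \eta_v\|_\infty = \sup_{w\in T}|\psi(w)|\,\eta_v(\p(w)) = \sup_{\p(w)\in S_v} |\psi(w)| ,
\]
so the isometry hypothesis forces $\sup_{\p(w)\in S_v}|\psi(w)| = 1$ for every $v \neq o$.

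To finish, fix a descending path $o, v_1, v_2, \dots$ with $|v_k| = k$, which exists because $T$ has no terminal vertex. For each $k$ the identity above yields a vertex $w_k$ with $\p(w_k) \in S_{v_k}$ and $|\psi(w_k)| \geq \tfrac12$; since $\p(w_k)\in S_{v_k}$ forces $|\p(w_k)| \geq |v_k| = k$, we obtain $|\psi(w_k)|\,|\p(w_k)| \geq k/2 \to \infty$, contradicting $M < \infty$. (If instead $\p^{-1}(S_{v_k})$ were empty for some $k$, then $\|\psi C_\p \eta_{v_k}\|_\infty = 0 \neq 1 = \|\eta_{v_k}\|_\L$, already contradicting the isometry property.)

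I expect the only genuinely delicate point to be the verification that $D\eta_v$ is supported at the single vertex $v$, so that $\|\eta_v\|_\L = 1$ independently of $|v|$; everything else is a direct comparison between the $L^\infty$-norm of the image and the growth-rate constraint $\sup_v|\psi(v)||\p(v)| < \infty$. The underlying mechanism is that the $\L$-norm of $\eta_v$ does not grow with $|v|$, whereas an isometry would have to reproduce this unit norm on the $L^\infty$ side using weights $\psi(w)$ attached to vertices $w$ that $\p$ sends arbitrarily deep into the tree, which boundedness forbids.
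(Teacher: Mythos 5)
Your proof is correct. It shares the paper's broad strategy---play the unit $\L$-norm of indicator-type test functions against the growth constraint of Theorem \ref{bdd1}---but the route through it is genuinely different in the details. The paper tests against the one-point indicators $\chi_w$: the isometry hypothesis forces $\p$ to be surjective with $\sup_{v \in \p^{-1}(w)} |\psi(v)| = 1$ on every fiber, and since the fiber over $w$ pins $|\p(v)|$ to exactly $|w|$, a single vertex with $|w| \geq 2$ already yields the contradiction $1 = \|\psi C_\p\| \geq \|\psi(v)|\p(v)|\|_\infty \geq |w| > 1$, using both the lower norm bound of Theorem \ref{bdd1} and the fact that an isometry has operator norm $1$. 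You test against the sector indicators $\eta_v$ instead; because $\p(w) \in S_v$ only bounds $|\p(w)|$ from below, you need a sequence of sectors along a ray and a factor-of-$\tfrac12$ selection, and your contradiction is asymptotic ($k/2 \to \infty$) rather than immediate. In exchange you use only the boundedness criterion (iii) of Theorem \ref{bdd1}, not its norm estimate, and you never need to establish surjectivity of $\p$ (your parenthetical disposes of empty preimages directly). The one delicate step, the identity $D\eta_v = \chi_v$ for $v \neq o$ giving $\|\eta_v\|_\L = 1$ and $\eta_v \in \L_o$, you verify correctly, and it is what lets a single argument cover $\L$ and $\L_o$ at once---playing the same role as the paper's opening reduction to $\L_o$. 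In short: the paper's argument is sharper and more local (contradiction already at depth $2$, plus the surjectivity information as a by-product), while yours is slightly more self-contained and robust, needing less of Theorem \ref{bdd1}.
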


\begin{proof}
It is enough to prove the statement for the case $\psi C_\p : \L_o \to L^\infty$. 
We assume that $\psi C_\p : \L_o \to L^\infty$ is an isometry. 
If there exists a vertex $w \not\in \p (T)$, then we have that 
\begin{equation*}
1 = \| \psi C_\p \chi_w \|_\infty = \sup_{v \in T} | \psi (v) \chi_w ( \p (v)) | = 0 . 
\end{equation*}
This is a contradiction. 
Hence $\p$ must be surjective and 
\begin{equation*}
\sup \{ | \psi (v)| : v \in \p^{-1} (w) \} = 1 
\end{equation*}
for any $w \in T$. 
Fix a vertex $w \in T$ with $|w| >1$. 
By Theorem \ref{bdd1}, we get 
\begin{eqnarray*}
1 
& = & \| \psi C_\p \| \geq \| \psi (v) |\p (v)| \|_\infty \\ 
& \geq & \sup_{\p (v) =w} | \psi (v) | |\p (v)| \\ 
& = & |w| \cdot \sup_{\p (v) =w} | \psi (v) | = |w| >1 . 
\end{eqnarray*}
This is a contradiction. 
We conclude that $\psi C_\p$ is not an isometry. 
\end{proof}

\subsection{Boundedness from below and minimum moduli} \ 

We characterize the boundedness from below for the weighted composition operators $\psi C_\p$ acting from $\L$ to $L^\infty$. 
We denote the injectivity modulus $j( \psi C_\p )_{\L \to L^\infty}$ (the surjectivity modulus $k( \psi C_\p )_{\L \to L^\infty}$, resp.) by $j( \psi C_\p )$ ($k( \psi C_\p )$, resp.) in short. 
\begin{theorem} \label{inj-mod-2}
Let $\psi$ be a bounded function on $T$ and $\p$ be a self-map of $T$. 
Suppose that $\psi C_\p$ is bounded from $\L$ to $L^\infty$. 
\begin{enumerate}
\item If $\p$ is not surjective, then $j( \psi C_\p) =0$. 
\item If $\p$ is surjective, then 
\begin{equation} \label{est-inj-2}
\frac{\, 1 \,}{3} \, \inf_{w \in T} \left( \sup_{v \in \p^{-1} (w)} | \psi (v)| \right) 
\leq j( \psi C_\p) 
\leq \inf_{w \in T} \left( \sup_{v \in \p^{-1} (w)} | \psi (v)| \right) . 
\end{equation}
\end{enumerate}
\end{theorem}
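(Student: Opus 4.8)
The plan is to handle the two parts separately, following the outline of Theorem \ref{inj-mod} but correcting for the fact that the domain norm is now $\| \cdot \|_\L$ rather than $\| \cdot \|_\infty$. For part (i), when $\p$ is not surjective I would choose $w \notin \p (T)$ and test against the single-vertex characteristic function $\chi_w$. This $\chi_w$ is nonzero in $\L$ (indeed $\| \chi_w \|_\L \geq \| D \chi_w \|_\infty = 1$), while $\psi C_\p \chi_w (v) = \psi (v) \chi_w ( \p (v)) \equiv 0$ because $\p (v) \neq w$ for every $v$. Thus $\psi C_\p$ annihilates a nonzero vector, forcing $j( \psi C_\p ) = 0$; no estimate is needed here.

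For part (ii) write $M = \inf_{w \in T} \bigl( \sup_{v \in \p^{-1} (w)} | \psi (v)| \bigr)$. For the upper bound I would again test against the $\chi_w$. For $w \neq o$ one has $\| \chi_w \|_\L = 1$ and $\| \psi C_\p \chi_w \|_\infty = \sup_{v \in \p^{-1} (w)} | \psi (v)|$, so $j( \psi C_\p ) \leq \sup_{v \in \p^{-1} (w)} | \psi (v)|$; for $w = o$ the only change is $\| \chi_o \|_\L = 2$, which gives the even smaller bound $j( \psi C_\p ) \leq \tfrac12 \sup_{v \in \p^{-1} (o)} | \psi (v)| \leq \sup_{v \in \p^{-1} (o)} | \psi (v)|$. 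Taking the infimum over all $w \in T$ yields $j( \psi C_\p ) \leq M$.

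The lower bound is where the factor $\tfrac13$ appears, and it is the heart of the matter. For $f \in \L$ with $\| f \|_\L = 1$ I would first reorganize the supremum by grouping vertices according to their image under $\p$; since surjectivity gives $\p (T) = T$,
\begin{equation*}
\| \psi C_\p f \|_\infty = \sup_{v \in T} | \psi (v)| \, | f( \p (v))| = \sup_{w \in T} | f(w)| \Bigl( \sup_{v \in \p^{-1} (w)} | \psi (v)| \Bigr) \geq M \, \| f \|_\infty .
\end{equation*}
The second ingredient is the elementary comparison $\| f \|_\L = | f(o)| + \| Df \|_\infty \leq \| f \|_\infty + 2 \| f \|_\infty = 3 \| f \|_\infty$, using the bound $\| Df \|_\infty \leq 2 \| f \|_\infty$ recorded in the introduction; this gives $\| f \|_\infty \geq \tfrac13 \| f \|_\L = \tfrac13$. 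Combining the two displays gives $\| \psi C_\p f \|_\infty \geq \tfrac{M}{3}$, and taking the infimum over all such $f$ gives $j( \psi C_\p ) \geq \tfrac{M}{3}$.

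The main obstacle—really the only substantive point—is this lower bound: one has to see that converting the $L^\infty$-estimate $\| \psi C_\p f \|_\infty \geq M \| f \|_\infty$ into an $\L$-estimate costs exactly the factor in $\| f \|_\L \leq 3 \| f \|_\infty$, and that surjectivity is precisely what lets the supremum over $\p (T)$ range over all of $T$ so that the constant $M$ can be pulled out; everything else is bookkeeping with characteristic functions. I would also remark afterward that condition (iii) of Theorem \ref{bdd1} forces $| \psi (v)| \leq C / | \p (v)|$ for a constant $C$, so that $\sup_{v \in \p^{-1} (w)} | \psi (v)| \to 0$ as $|w| \to \infty$ and hence $M = 0$; the two bounds then collapse, and the estimate in fact shows that a bounded $\psi C_\p : \L \to L^\infty$ is never bounded below.
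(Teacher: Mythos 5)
Your proof is correct, and in parts (i) and the upper bound it coincides with the paper's argument (test against the characteristic functions $\chi_w$); the genuine difference is in the lower bound. The paper does not invoke the norm comparison $\| f \|_\L \leq 3 \| f \|_\infty$ as a black box: writing $r = |f(o)|$, so $\| Df \|_\infty = 1-r$ for a unit vector $f$, it picks a sequence $\{ w_n \}$ with $|Df(w_n)| > (1-r)(1-\tfrac{1}{n})$, observes that one of $w_n , w_n^-$ satisfies $|f| > \tfrac12 (1-r)(1-\tfrac{1}{n})$, tests $\psi C_\p f$ at preimages of $o$ and of these vertices to obtain $\| \psi C_\p f \|_\infty \geq \max \{ r , \tfrac12 (1-r) \} \, M$, and then computes $\inf_{0 \leq r \leq 1} \max \{ r, \tfrac12 (1-r) \} = \tfrac13$. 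Your route compresses that entire construction into the single inequality $\| f \|_\infty \geq \tfrac13 \| f \|_\L$, deduced from the bound $\| Df \|_\infty \leq 2 \| f \|_\infty$ already recorded in the introduction, together with the regrouping $\| \psi C_\p f \|_\infty = \sup_w |f(w)| \bigl( \sup_{v \in \p^{-1}(w)} |\psi(v)| \bigr) \geq M \| f \|_\infty$, where surjectivity is what lets $w$ range over all of $T$; this is the same estimate the paper reproves inline via the $r$-optimization, but your version is more modular and shorter (note that for unbounded $f \in \L$ the inequality $\| f \|_\infty \geq \tfrac13 \| f \|_\L$ is trivially true, so your chain is valid for every unit vector). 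Two further points in your favor: you correctly handle $w = o$, where $\| \chi_o \|_\L = 2$ rather than $1$ as the paper asserts (a harmless slip, since the correction only strengthens the upper bound); and your closing remark is accurate and worth keeping --- by condition (iii) of Theorem \ref{bdd1} one has $\sup_{v \in \p^{-1}(w)} |\psi(v)| \leq C/|w|$, and since the tree has no terminal vertices there are vertices of arbitrarily large modulus, so under the standing boundedness hypothesis the quantity $M$ is always $0$; both sides of \eqref{est-inj-2} then collapse to give $j(\psi C_\p) = 0$, so no bounded $\psi C_\p : \L \to L^\infty$ is bounded below and the positivity condition in Corollary \ref{cor-bdd-below-Lip-Linfty} is never satisfied. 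The theorem as stated remains true, but your observation exposes this degeneracy, which the paper does not mention.
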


\begin{proof} 
(i) can be proved exactly in the same way as in the proof of (i) of Theorem \ref{inj-mod}. 

(ii) Suppose $\p$ is surjective. 
Let 
\begin{equation*} 
M = \inf_{w \in T} \left( \sup_{v \in \p^{-1} (w)} | \psi (v)| \right) . 
\end{equation*}

Since $\| \chi_w \|_{\L} =1$ for any $w$, we get 
\begin{equation*}
j( \psi C_\p) \leq \inf_{w \in T} \, \| \psi C_\p \chi_w \|_\infty = M . 
\end{equation*}

Next we will show $j( \psi C_\p) \geq M/3$. 
Let $f$ be in $L^\infty$ with $\| f \|_\L =1$. 
We put $r = |f(o)|$, then we have that 
\begin{eqnarray}
\| \psi C_\p f \|_\infty 
& = & \sup_{v \in T} | \psi (v) \cdot f ( \p (v))| \nonumber \\ 
& \geq & |f(o)| \sup_{v \in \p^{-1} (o)} | \psi (v) | \ \geq \ r M . \label{ineq-1}
\end{eqnarray}
Since $\sup_{v \in T} |Df (v)| = 1-r$, there exists a sequence $\{ w_n \} \subset T^*$ such that 
\begin{equation*}
|Df (w_n)| = |f(w_n) -f(w_n^-)| > (1-r) \left(1 - \frac{\, 1 \,}{n} \right) . 
\end{equation*}
Then 
\begin{equation*}
\max \{ |f(w_n)| , |f(w_n^-)| \} > \frac{\, 1 \,}{2}(1-r) \left(1 - \frac{\, 1 \,}{n} \right) . 
\end{equation*}

We choose one of the vertex from $\{ w_n , w_n^- \}$ which attains the maximum above, and put $u_n$. 
Since the sequence $\{ u_n \}$ in $T$ satisfies that for any $n$, 
\begin{equation*}
|f(u_n)| > \frac{\, 1 \,}{2}(1-r) \left(1 - \frac{\, 1 \,}{n} \right) , 
\end{equation*}
we have that 
\begin{eqnarray*}
\| \psi C_\p f \|_\infty 
& \geq & |f(u_n)| \sup_{v \in \p^{-1} (u_n)} | \psi (v) | \\ 
& \geq & \frac{\, 1 \,}{2}(1-r) \left(1 - \frac{\, 1 \,}{n} \right) M . 
\end{eqnarray*}
Letting $n \to \infty$, we get 
\begin{equation}
\| \psi C_\p f \|_\infty 
\geq \frac{\, 1 \,}{2}(1-r) M . \label{ineq-2}
\end{equation}
By \eqref{ineq-1} and \eqref{ineq-2}, we have that 
\begin{equation*}
\| \psi C_\p f \|_\infty 
\geq \inf_{0 \leq r \leq 1} \left( \max \left\{ r, \frac{\, 1 \,}{2}(1-r) \right\} \right) M \ = \ \frac{\, 1 \,}{3} M . 
\end{equation*}
Therefore, we obtain $j( \psi C_\p) \geq M/3$. 
\end{proof}

\begin{corollary} \label{cor-bdd-below-Lip-Linfty}
Let $\psi$ be a bounded function on $T$ and $\p$ be a self-map of $T$. 
Suppose that $\psi C_\p$ is bounded from $\L$ to $L^\infty$. 
Then $\psi C_\p$ is bounded below if and only if $\p$ is surjective on $T$ and 
\begin{equation*} 
\inf_{w \in T} \left( \sup_{v \in \p^{-1} (w)} | \psi (v)| \right) > 0 . 
\end{equation*}
\end{corollary}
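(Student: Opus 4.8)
The plan is to read this off directly from Theorem \ref{inj-mod-2}, exactly as Corollary \ref{cor-bdd-below-Linfty} was obtained from Theorem \ref{inj-mod} in the $L^\infty \to L^\infty$ setting. The key observation is that, by the definition of the injectivity modulus, a bounded operator is bounded below if and only if its injectivity modulus is strictly positive; thus $\psi C_\p$ is bounded below if and only if $j(\psi C_\p) > 0$. Writing
\[
M = \inf_{w \in T} \left( \sup_{v \in \p^{-1}(w)} | \psi(v)| \right),
\]
it therefore suffices to show that $j(\psi C_\p) > 0$ holds precisely when $\p$ is surjective and $M > 0$.

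For the forward implication I would argue by contraposition using part (i) of Theorem \ref{inj-mod-2}: if $\p$ is not surjective, then $j(\psi C_\p) = 0$, so $\psi C_\p$ is not bounded below; hence boundedness below forces surjectivity of $\p$. Once $\p$ is known to be surjective, part (ii) of Theorem \ref{inj-mod-2} supplies the two-sided estimate $\frac{1}{3} M \leq j(\psi C_\p) \leq M$. Since the two outer quantities are simultaneously zero or positive, this yields $j(\psi C_\p) > 0$ if and only if $M > 0$. Combining the two cases gives the stated equivalence.

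There is essentially no obstacle here: all the analytic content—in particular the construction of the test functions $F_N$ and the splitting of the norm of $f$ according to $|f(o)|$ versus the supremum of $|Df|$—was already carried out in the proof of Theorem \ref{inj-mod-2}. The only point worth noting is that the constant $\tfrac{1}{3}$ in the lower bound is irrelevant for this corollary, since the statement concerns only the positivity of $j(\psi C_\p)$ and not its exact value; the sandwich inequality is being used purely to transfer positivity between $j(\psi C_\p)$ and $M$.
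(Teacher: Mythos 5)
Your proposal is correct and matches the paper's intended argument: the paper states this corollary immediately after Theorem \ref{inj-mod-2} without proof, precisely because it follows by combining the equivalence of boundedness below with $j(\psi C_\p)>0$, part (i) for the non-surjective case, and the sandwich estimate \eqref{est-inj-2} (whose constant $\tfrac{1}{3}$ is, as you note, immaterial for positivity). Nothing further is needed.
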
 

We give some necessary conditions and a sufficient condition for $\psi C_\p$ to be surjective. 
\begin{proposition} \label{prop-sur2}
Let $\psi $ be a bounded function on $T$ and $\p$ be a self-map of $T$. 
Suppose that $\psi C_\p$ is bounded from $\L$ to $L^\infty$. 
\begin{enumerate}
\item If there exists $v_0 \in T$ such that $\psi ( v_0 ) =0$, then $k( \psi C_\p) = 0$. 
\item If $\p$ is not injective, then $k( \psi C_\p) =0$. 
\item If $\p$ is injective, then 
\begin{equation*} 
\frac{\, 1 \,}{3} \, \inf_{v \in T} | \psi (v)| 
\leq k( \psi C_\p) 
\leq \inf_{v \in T} | \psi (v)| (1+| \p (v)|) . 
\end{equation*}
\end{enumerate}
\end{proposition}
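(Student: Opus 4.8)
The three parts divide naturally, and two of them reduce to arguments already in the paper. \textbf{Parts (i) and (ii)} I would handle exactly as in the $L^\infty \to L^\infty$ setting. For part (i), note that $\chi_{v_0} \in L^\infty$ with $\| \chi_{v_0} \|_\infty = 1$, so if $k( \psi C_\p) > 0$, then for any $0 < r < k( \psi C_\p)$ the function $r \chi_{v_0}$ lies in $\psi C_\p (U_\L)$; choosing $f \in \L$ with $\| f \|_\L \leq 1$ and $\psi C_\p f = r \chi_{v_0}$ and evaluating at $v_0$ forces $r = \psi (v_0) f( \p (v_0)) = 0$, a contradiction. This is the argument of Proposition \ref{prop-sur} with $\L$ replacing $L^\infty$ as the domain, which is legitimate because only $\| \chi_{v_0} \|_\infty$, not $\| \chi_{v_0} \|_\L$, enters the reasoning. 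For part (ii), I would pick distinct $v_1 , v_2$ with $\p (v_1) = \p (v_2)$ and mimic Theorem \ref{sur-mod}(i): hitting $r \chi_{v_1}$ with some $f \in U_\L$ forces $\psi (v_2) f( \p (v_1)) = 0$ while $f( \p (v_1)) \neq 0$, hence $\psi (v_2) = 0$, and part (i) then yields $k( \psi C_\p) = 0$.

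\textbf{Part (iii), upper bound.} I may assume $\psi$ has no zero on $T$, since otherwise part (i) gives $k( \psi C_\p) = 0$ and the inequality is trivial. Fix $v_0 \in T$ and $r < k( \psi C_\p)$. As $\chi_{v_0} \in U_{L^\infty}$, there is $f \in \L$ with $\| f \|_\L \leq 1$ and $\psi C_\p f = r \chi_{v_0}$; evaluating at $v_0$ gives $|f( \p (v_0))| = r / | \psi (v_0)|$. The growth condition \eqref{growth} then yields
\[
\frac{r}{| \psi (v_0)|} = |f( \p (v_0))| \leq |f(o)| + | \p (v_0)| \, \| Df \|_\infty \leq (1 + | \p (v_0)|) \| f \|_\L \leq 1 + | \p (v_0)| .
\]
Letting $r \to k( \psi C_\p)$ and taking the infimum over $v_0$ gives the stated upper bound.

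\textbf{Part (iii), lower bound.} I may assume $M = \inf_{v \in T} | \psi (v)| > 0$. The plan is to verify $\psi C_\p (U_\L) \supset (M/3) \, U_{L^\infty}$ directly. Given $g \in L^\infty$ with $\| g \|_\infty \leq M/3$, I define, using injectivity of $\p$,
\[
f(w) = \begin{cases} g( \p^{-1} (w)) / \psi ( \p^{-1} (w)) & (w \in \p (T)) \\ 0 & (w \notin \p (T)) \end{cases} ,
\]
exactly the function from the proof of Theorem \ref{sur-mod}(ii). Injectivity gives $\psi C_\p f = g$. The key estimate is that $f$ is uniformly small: $\| f \|_\infty \leq \| g \|_\infty / M \leq 1/3$, whence $|f(o)| \leq 1/3$ and $\| Df \|_\infty \leq 2 \| f \|_\infty \leq 2/3$, so $\| f \|_\L = |f(o)| + \| Df \|_\infty \leq 1$. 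Thus $g \in \psi C_\p (U_\L)$, proving $k( \psi C_\p) \geq M/3$.

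The only subtle point, and the source of the constant $1/3$ rather than an exact value, is the passage from the sup-norm bound on $f$ to its Lipschitz-norm bound: unlike the $L^\infty \to L^\infty$ case, where $\| f \|_\infty \leq 1$ sufficed, here one pays the universal price $\| f \|_\L \leq 3 \| f \|_\infty$ coming from $\| Df \|_\infty \leq 2 \| f \|_\infty$. I expect no serious obstacle beyond this; both the construction and the norm estimates are routine once the correct target is chosen ($\chi_{v_0}$ for the upper bound, the explicit $f$ for the lower bound).
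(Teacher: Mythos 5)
Your proposal is correct and follows essentially the same route as the paper: parts (i) and (ii) are deferred to Proposition \ref{prop-sur} and Theorem \ref{sur-mod}(i) just as the paper does, the upper bound in (iii) uses the same evaluation of $\psi C_\p f = (k-\e)\chi_w$ at $w$ together with the growth condition \eqref{growth}, and your lower bound merely inlines the explicit construction that the paper invokes by citing Theorem \ref{sur-mod}(ii) together with the inclusion $\frac{1}{3}\,U_{L^\infty} \subset U_\L$ coming from $\| f \|_\L \leq 3 \| f \|_\infty$. Your closing remark correctly identifies that this factor of $3$ is exactly where the constant $1/3$ enters, matching the paper's reasoning.
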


\begin{proof} 
(i) and (ii) can be proved exactly in the same way as in the proof of Proposition \ref{prop-sur} and (i) of Theorem \ref{sur-mod}. 

(iii) Assume that $\p$ is injective. 
To prove that $\displaystyle k( \psi C_\p) \leq \inf_{v \in T} | \psi (v)|(1+ |\p (v)|)$, we may assume $k( \psi C_\p) > 0$. 
Let $\varepsilon$ be a positive number less than $k( \psi C_\p)$. 
For any $w \in T$, there exists $f \in \L$ such that $\| f \|_\L \leq 1$ and $\psi C_\p f = (k( \psi C_\p) - \varepsilon ) \cdot \chi_w$. 
By the growth condition, we have that 
\begin{eqnarray*} 
k( \psi C_\p) - \varepsilon 
& = & (k( \psi C_\p) - \varepsilon ) \cdot \chi_w (w) \\ 
& = & \psi (w) \cdot f( \p (w)) \\ 
& \leq & | \psi (w)| (1+ | \p (w)|) . 
\end{eqnarray*} 
Since $w$ is arbitrary, we get $ k( \psi C_\p) - \varepsilon \leq \inf_{v \in T} | \psi (v)| (1+ | \p (v)|)$. 
Letting $\varepsilon \to 0$, we get $ k( \psi C_\p) \leq \inf_{v \in T} | \psi (v)| (1+ | \p (v)|)$. 

Next we assume $\inf_{v \in T} | \psi (v)| >0$. 
If $f $ is in $L^\infty$ with $\| f \|_\infty$, then $\| f \|_\L \leq 3$. 
Thus, by Theorem \ref{sur-mod}, we have that 
\begin{eqnarray*}
\psi C_\p \Big( U_\L \Big) 
\supset \psi C_\p \left( \frac{\, 1 \,}{3} \, U_{L^\infty} \right) 
\supset \left( \frac{\, 1 \,}{3} \, \inf_{v \in T} | \psi (v)| \right) \, U_{L^\infty} . 
\end{eqnarray*}
Thus we conclude $\displaystyle k( \psi C_\p) \geq \frac{\, 1 \,}{3} \, \inf_{v \in T} | \psi (v)| $. 
\end{proof}

The upper estimate of $k( \psi C_\p)$ in (iii) of Proposition \ref{prop-sur2} is not best possible. 
We present the example satisfying $\inf_{v \in T} | \psi (v)| (1+| \p (v)|) >0$ but $k( \psi C_\p ) =0$. 
\begin{example}
Let $\p (n) = 2n$, and define a function $\psi$ on $\Z$ by 
\begin{equation*}
\psi (n) = \left\{ \, 
\begin{matrix} 
1 & \ ( n =0 ) \vspace{2mm} \\ 
\displaystyle \frac{\, 1 \,}{n} & \ ( n \not= 0 ) 
\end{matrix} \right. . 
\end{equation*} 
Clearly, $\p$ is injective on $\Z$ and 
\begin{equation*}
\inf_{n \in \Z} | \psi (n)| =0, \ \ \mbox{and} \ \ \inf_{n \in \Z} | \psi (n)| (1+ |\p (n)|) = 2>0 . 
\end{equation*} 
We will show that $\psi C_\p$ is not surjective from $\L$ to $L^\infty$. 

To do this, assume $k( \psi C_\p) > M >0$. 
Let $g(n) = M (-1)^n$. 
Since $g \in M \cdot U_{L^\infty}$, there exists a function $f \in \L$ such that $\| f \|_\L \leq 1$ and $\psi C_\p f =g$. 
For $n \not= 0$, we have that $f(2n) = n g(n) = Mn(-1^n)$. 
Choose a positive integer $n$ satisfying $M (2n+1) >2$. 
Then we have that 
\begin{eqnarray*} 
2 
& \geq & |Df(2n+2)| + |Df(2n+1)| \\ 
& \geq & |f(2n+2) -f(2n)| \\ 
& = & M \cdot | (n+1) (-1)^{n+1} - n(-1)^n)| \\ 
& = & M (2n+1) >2 .
\end{eqnarray*} 
This is a contradiction, therefore $k( \psi C_\p) =0$, that is, $\psi C_\p$ is not surjective from $\L$ to $L^\infty$. 
\end{example}


\end{document}